\newtheorem{theorem}{Theorem}[section]
\newtheorem{proposition}[theorem]{Proposition}
\newtheorem{corollary}[theorem]{Corollary}
\newtheorem{lemma}[theorem]{Lemma}
\newtheorem{problem}[theorem]{Problem}
\theoremstyle{definition}
\newtheorem{definition}[theorem]{Definition}
\newtheorem{example}[theorem]{Example}
\newtheorem{question}[theorem]{Question}
\newtheorem{remark}[theorem]{Remark}
\newcommand{\PP}{\mathbb{P}}
\newcommand{\QQ}{\mathbb{Q}}
\newcommand{\CC}{\mathbb{C}}
\newcommand{\cO}{\mathcal{O} }
\newcommand{\cC}{\mathcal{C} }
\newcommand{\cE}{\mathcal{E} }
\newcommand{\cF}{\mathcal{F} }
\newcommand{\cH}{\mathcal{H} }
\newcommand{\cK}{\mathcal{K} }
\newcommand{\cQ}{\mathcal{Q} }
\newcommand{\cZ}{\mathcal{Z} }
\newcommand{\rE}{\mathrm{E} }
\newcommand{\rH}{\mathrm{H} }
\newcommand{\rM}{\mathrm{M} }
\newcommand{\rP}{\mathrm{P} }
\newcommand{\proj}{\mathrm{Proj}\;}
\newcommand{\cHom}{\mathcal{H}om}
\def\Hom{\mathrm{Hom} }
\def\Ext{\mathrm{Ext} }
\def\cExt{\mathcal{E}xt }
\def\lr{\rightarrow}
\def\bH{\mathbf{H}}
\def\bM{\mathbf{M}}
\def\bN{\mathbf{N}}
\newcommand{\ses}[3]{0\lr{#1}\lr{#2}\lr{#3}\lr 0}
\begin{document}

\title[Compactified spaces of rational quartic plane curves]{Sheaf theoretic compactifications of the space of rational quartic plane curves}

\author{Kiryong Chung}
\address{Department of Mathematics Education, Kyungpook National University, 80 Daehakro, Bukgu, Daegu 41566, Korea}
\email{krchung@knu.ac.kr}

\keywords{Plane rational curves, Compactification, Birational geometry}
\subjclass[2010]{14E15, 14E05, 14M15}

\begin{abstract}
Let $R_4$ be the space of rational plane curves of degree $4$. In this paper, we obtain a sheaf theoretic compactification of $R_4$ via the space of $\alpha$-semistable pairs on $\PP^2$ and its birational relations through wall-crossings of semistable pairs. We obtain the Poincar\'e polynomial of the compactified space.
\end{abstract}

\maketitle

\section{Introduction}
\subsection{Compactifications and results}
Let $R_d$ be the space of \emph{irreducible} rational plane curves of degree $d$. The space $R_d$ is an irreducible variety of dimension $3d-1$ (\cite{KP01}). One of very famous compactification of $R_d$ is so called the moduli space of stable maps, which is a key role in the curve counting theory. For the detailed description, see \cite{FP97}. This compactification is motivated by viewing the rational plane curve with its unique normalization map (up to isomorphism). In this paper, our aim is to obtain a sheaf-theoretic compactification of $R_d$ and its birational relations. In this case, the curve is regarded as the pair of the direct image sheaf of the normalizaition map and its canonical section. In detail,
\begin{enumerate}
\item let $\rM(dm+1)$ be the moduli space of stable sheaves $F$ with Hilbert polynomial $\chi(F(m))=dm+1$ and 
\item $\rM^{\alpha}(dm+1)$ be the moduli space of $\alpha$-semistable pairs $(s,F)$ with Hilbert polynomial $\chi(F(m))=dm+1$.
\end{enumerate}
For explicit definition and properties, see Section \ref{revelent}. Note that all of these moduli spaces are projective scheme (\cite{Sim94, He98, LP93b}). For the irreducible plane curve $C$ of degree $d$, let $n:C^{v}\lr C$ be the normalization map such that $C^{v}\cong \PP^1$. Then the pair (resp. sheaf $n_{*}\cO_{C^{v}}$) $(s, n_{*}\cO_{C^{v}})$ with the unique section $s\in \rH^0(n_{*}\cO_{C^{v}})$ is $\alpha$-stable (resp. stable) for any positive parameter $\alpha>0$ (\cite[Proposition 3.18]{CC11}). That is, we have natural injective maps
\[
R_d\subset\rM(dm+1);\;[C]\mapsto n_{*}\cO_{C^{v}}\text{ and }
R_d\subset\rM^{\alpha}(dm+1);\;[C]\mapsto (s, n_{*}\cO_{C^{v}}).
\]
Let us denote $\bM_d$ (resp. $\bM_d^{\alpha}$) by the closure of the rational curves space $R_d$ in the moduli space $\rM(dm+1)$ (resp. $\rM^{\alpha}(dm+1)$).
\begin{problem}
Study the geometry $\bM_d$ via its variations spaces $\bM_d^{\alpha}$.
\end{problem}
\begin{remark}
For the degree is $d\leq 3$, one can easily see that all of $\alpha$-stable sheaves spaces $\bM_d^{\alpha}$ are isomorphic to $\bM_d$.
For $d=1$ and $2$, the space $\bM_d$ is isomorphic to the complete linear system $|\cO_{\PP^2}(d)|$.
The space $\bM_3$ is isomorphic to a $\PP^6$-bundle over $\PP^2$. Here the fiber $\PP^6$ parameterizes the rational cubic curves passing through a singular point $\PP^2$.
\end{remark}
For the higher degree cases since there does not have any systematic method to study the geometry of $\bM_d$, we do the work for case $d=4$ firstly. We will prove that there exists a divisorial contraction among $\bM^{\alpha}_d$. Let us state main theorem of this paper. For sufficiently large parameter $\alpha>>1$, let denote by $\bM_{d}^{\alpha \!>\!>1}:=\bM_{d}^{+}$.
It is known that the space $\bM_d^{+}$ is isomorphic to the relative Hilbert scheme of $\frac{(d-1)(d-2)}{2}$-points over the complete linear system $|\cO_{\PP^2}(d)|$ (\cite[Section 4.4]{He98}). We call that a birational contraction is a \emph{smooth} blow-down whenever the two birational varieties are locally smooth around the exceptional divisor and its image in the target space.
\begin{theorem}[\protect{Proposition \ref{mainprop}}]\label{thm:mainpropintro}
Under the above notation, there exist a smooth blow-down
\[p_{R}:\bM_4^{+}\longrightarrow \bM_4\]
where the exceptional divisor parameterizes the pair of the (degenerated) rational cubic curve and the colinear three points.
\end{theorem}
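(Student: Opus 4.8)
The plan is to realize $p_R$ as the section-forgetting morphism coming from the wall-crossing, to locate its exceptional divisor among the reducible quartics, and then to read off its local structure by deformation theory. Sending a pair $(s,F)$ to its underlying sheaf $F$ defines a rational map from $\bM_4^+$ to $\rM(4m+1)$. Over the dense locus $R_4$ one has $F=n_*\cO_{\PP^1}$ with $h^0(F)=h^0(\cO_{\PP^1})=1$, so the section is unique up to a scalar and is recovered from $F$; hence the map is an isomorphism over $R_4$, has image in $\bM_4$, and is birational. To promote it to a morphism on all of $\bM_4^+$ I would use the wall-crossing framework: at the $\alpha\gg1$ end $\bM_4^+$ contains pairs whose underlying sheaf is not stable, and crossing the finitely many walls down to the $\alpha\to0^+$ chamber replaces these by pairs with semistable underlying sheaf, after which the forgetful morphism $\bM_4^{0+}\to\bM_4$ sends each pair to its now-stable sheaf. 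I define $p_R$ as this composite; it is projective and, by the computation over $R_4$, birational onto $\bM_4$.

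\emph{The exceptional locus.} The map $p_R$ is an isomorphism exactly where $F$ determines the pair, i.e.\ where $h^0(F)=1$. I would show that the jumping locus $h^0(F)\ge2$, equivalently the locus of pairs whose underlying sheaf fails to be stable, is carried by the reducible quartics $C_3\cup L$ with $C_3$ a (degenerate) rational cubic and $L$ a line: on such a curve each component contributes a section, the underlying sheaf becomes non-simple, and the fibre of $p_R$ becomes positive-dimensional. In the relative Hilbert scheme model of $\bM_4^+$ (\cite[Section 4.4]{He98}) the associated length-$3$ subscheme is $C_3\cap L$, three points lying on $L$ and hence collinear, which are the ``colinear three points'' of the statement. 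A dimension count, $\dim\{C_3\}=8$ and $\dim\{L\}=2$, shows that this set $E$ is a divisor in $\bM_4^+$, while $p_R$ collapses the section/line data so that $Z:=p_R(E)$ has strictly smaller dimension; one must also check that no other boundary stratum (such as a union of two conics) contributes a further exceptional divisor.

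\emph{Smoothness and blow-up structure.} I would first prove that $\bM_4^+$ is smooth along $E$ by the deformation theory of $\alpha$-stable pairs: the tangent and obstruction spaces are the hyper-$\Ext$ groups of the two-term complex $[\cO_{\PP^2}\mapright{s}F]$, which I would compute through the local-to-global spectral sequence and the normalization sequence of $C_3\cup L$ at its nodes, verifying that the obstruction space vanishes. Next I would identify the contracted fibres: $p_R$ collapses projective spaces (the section, resp.\ extension, parameters produced by the wall-crossing), and I would show that the normal bundle of $E$ restricts to $\cO(-1)$ on each such fibre. A Fujiki--Nakano type contraction criterion then yields that $\bM_4$ is smooth along $Z$ and that $p_R$ is the blow-up of $\bM_4$ along the smooth center $Z$, with $E=\PP(N_{Z/\bM_4})$; this is exactly the statement that $p_R$ is a smooth blow-down.

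\emph{Main obstacle.} The crux is this last, local step on the reducible nodal curve $C_3\cup L$. Since the boundary sheaves are torsion-free but not locally free at the three nodes $C_3\cap L$, the $\Ext$-computations require careful control of the local $\cExt$ contributions at those points; one must simultaneously establish the vanishing of the obstruction spaces (smoothness of $\bM_4^+$ along $E$, and then of $\bM_4$ along $Z$) and the exact $\cO(-1)$ normal-bundle behaviour along the contracted fibres, in order to conclude that $p_R$ is a genuine blow-up rather than a more general divisorial contraction. A secondary difficulty, already visible in the construction, is that along $E$ the naive underlying sheaf is non-simple, so the value of $p_R$ must be taken as the stable limit obtained after the wall-crossing rather than by literally forgetting the section.
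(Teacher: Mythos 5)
Your identification of the exceptional divisor is wrong, and this is the central gap. You take the contracted divisor to be the locus of pairs over \emph{reduced} quartics $C_3\cup L$ with associated subscheme $Z=C_3\cap L$. Such pairs do not lie in $\bM_4^+$ at all: a limit of irreducible rational quartics degenerating to a reduced $C_3\cup L$ arises from a stable map with \emph{connected} genus-$0$ domain, so exactly one of the three points of $C_3\cap L$ is a gluing point, and the limiting conductor consists of the node of $C_3$ together with the other two points of $C_3\cap L$ --- a non-collinear triple; in particular those pairs are never destabilized at the wall. What the paper proves (Step 1 of Proposition \ref{mainprop}, via stable maps whose image is the \emph{non-reduced} quartic $2L+Q$, elementary modification, Proposition \ref{sectionspace} and Remark \ref{remsection} plus a B\'ezout argument) is that the contracted divisor is $E=(\pi^+)^{-1}(D_1)$: pairs $(Z,2L+Q)$ with $Q$ a conic and $Z\subset L$ an \emph{arbitrary} length-$3$ subscheme of the line, so $E$ is a $\PP^3$-bundle over $\Gamma=\PP^2\times\PP^5$; the ``degenerate rational cubic'' of the statement is always $L\cup Q$, and the three points move independently of the curve. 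Note also that your locus could not be contracted by any section-forgetting map even if it were present: there $(C_3,L)$ determines both the pair and the sheaf, so the fibres would be points. On the paper's $E$, by contrast, the entire $\PP^3$ of choices of $Z$ maps to the single sheaf $F_{(L,Q)}$, the unique non-split extension $\ses{\cO_{L\cup Q}}{F}{\cO_L}$ lying in $\bM_4$, and this uniqueness rests on Proposition \ref{sectionspace}: $h^0(F)=1$ for \emph{every} $[F]\in\bM_4$. For the same reason your description of the contraction as collapsing a Brill--Noether jumping locus inside $\bM_4$ cannot work --- $\bM_4$ contains no such locus; the jumping $h^0\geq 2$ happens only for the unstable sheaves underlying pairs in $Y^+$, before the flip.

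Beyond this, two steps are asserted rather than proved, and both are where the paper does its real work. First, wall-crossing gives only a flip between $\bM_4^+$ and $\bM_4^-$ through the common resolution $\widetilde{\bM}_4$, not a morphism; the statement that $p_R=r\circ p^-\circ(p^+)^{-1}$ is a morphism is equivalent to $p^+:\widetilde{\bM}_4\to\bM_4^+$ being an isomorphism, which the paper establishes in Step 2 by showing that the strict transform under $p^-$ of the blow-up along $\Gamma$ has exceptional divisor $E$ that remains a divisor after applying $p^+$. Second, the genuine blow-up (not merely divisorial contraction) structure is obtained there not from hyper-Ext obstruction computations but from the normal-space identification $N_{\Gamma/\bM_4}\cong N_{t(D_1)/\bN}$, which uses the fibration $\pi^-:\bM_4\to\bN$ (Lemma \ref{keylemm}) and the Hilbert-scheme contraction $t$ of Proposition \ref{prop:hilbcon}. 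Your Fujiki--Nakano route could in principle substitute for this last step, but only after the correct center $\Gamma$ and divisor $E$ have been identified, which is precisely the part the proposal gets wrong.
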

The key ingredient of the the proof of Theorem \ref{thm:mainpropintro} is the analysis of the restrictions of pair wall-crossing of $\rM^{\alpha}(4m+1)$. Using this fact, one can compute the virtual Poincar\'e polynomial of the space $\bM_4$ (Corollary \ref{maincor}).
\begin{corollary}
The virtual Poincar\'e polynomial of $\bM_4$ is given by
\[
1+2t^2+5t^4+8t^6+11t^8+12t^{10}+13t^{12}+13t^{14}+11t^{16}+7t^{18}+3t^{20}+t^{22}.
\]
\end{corollary}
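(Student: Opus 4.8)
The plan is to obtain $P(\bM_4)$ from $P(\bM_4^{+})$ by stripping off the contribution of the exceptional locus of the blow-down $p_R$ of Theorem~\ref{thm:mainpropintro}, using that the virtual Poincar\'e polynomial is additive over locally closed decompositions and multiplicative along Zariski-locally trivial fibrations. Write $E\subset\bM_4^{+}$ for the exceptional divisor and $Z:=p_R(E)\subset\bM_4$ for its image. Since a blow-down restricts to an isomorphism $\bM_4^{+}\setminus E\xrightarrow{\ \sim\ }\bM_4\setminus Z$, additivity over the decompositions $\bM_4=(\bM_4\setminus Z)\sqcup Z$ and $\bM_4^{+}=(\bM_4^{+}\setminus E)\sqcup E$ gives
\[
P(\bM_4)=P(\bM_4^{+})-P(E)+P(Z).
\]
Everything is thus reduced to the three polynomials $P(\bM_4^{+})$, $P(E)$, and $P(Z)$.

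First I would compute the correction $P(E)-P(Z)$ directly from the description of $E$ supplied by Theorem~\ref{thm:mainpropintro}. A point of $E$ is a (degenerate) rational cubic $C_3$ together with three collinear points, and these three points are exactly the intersection $C_3\cap\ell$ cut out by a line $[\ell]\in\check{\PP}^2$; I would identify $E$ accordingly as a $\check{\PP}^2$-bundle over the space $\bM_3$ of rational cubics, with $p_R$ contracting the $\check{\PP}^2$-direction, so that $Z\cong\bM_3$. Then $P(E)=P(\bM_3)\,P(\PP^2)$ and hence $P(E)-P(Z)=P(\bM_3)\bigl(P(\PP^2)-1\bigr)=P(\bM_3)(t^2+t^4)$. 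The factor $P(\bM_3)$ is read off from the Remark, where $\bM_3$ appears as a $\PP^6$-bundle over $\PP^2$, giving $P(\bM_3)=(1+t^2+t^4)(1+t^2+\cdots+t^{12})$.

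For $P(\bM_4^{+})$ I would use the morphism $\pi:\bM_4^{+}\to\mathrm{Hilb}^3(\PP^2)$ recording the length-three subscheme of nodes, whose fibre over $\xi$ is the linear system of quartics singular along $\xi$. Over the open locus $U\subset\mathrm{Hilb}^3(\PP^2)$ of non-collinear $\xi$ this fibre is a $\PP^5$, so $\pi^{-1}(U)$ contributes $P(U)\,P(\PP^5)$; the closed collinear locus $\Delta:=\mathrm{Hilb}^3(\PP^2)\setminus U$ is a $\PP^3$-bundle over $\check{\PP}^2$ and contributes the virtual Poincar\'e polynomial of $\pi^{-1}(\Delta)$. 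Using the G\"ottsche formula $P(\mathrm{Hilb}^3(\PP^2))=1+2t^2+5t^4+6t^6+5t^8+2t^{10}+t^{12}$ to get $P(U)$, and summing the two contributions by additivity, produces $P(\bM_4^{+})$; substituting into the displayed formula then yields the stated polynomial.

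The hard part is the collinear stratum in the computation of $P(\bM_4^{+})$: a quartic singular at three collinear points must contain their line (by B\'ezout), so the fibre of $\pi$ jumps along $\Delta$ and $\pi$ is not locally trivial there. Determining the correct flat limit fibre over $\Delta$ — equivalently, pinning down exactly which degenerate (reducible or non-reduced) pairs lie in the closure $\bM_4^{+}$ — is precisely what the restriction-of-wall-crossing analysis behind Theorem~\ref{thm:mainpropintro} controls, and it is where essentially all the work lies. Once that fibre and the normal data along $Z$ are in hand, assembling $P(\bM_4)$ is the routine subtraction above.
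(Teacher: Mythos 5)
Your overall strategy---the blow-down additivity $P(\bM_4)=P(\bM_4^{+})-P(E)+P(Z)$---is legitimate and, after rearrangement, equivalent to the paper's computation; but both concrete inputs you feed into it are wrong, and each error changes the answer. First, the exceptional divisor is misidentified. By the proof of Proposition \ref{mainprop}, $E=(\pi^{+})^{-1}(D_1)$ is a $\PP^3$-bundle over $Z=\Gamma\cong\PP^2\times\PP^5$, where the $\PP^2$ parameterizes lines $L$, the $\PP^5$ parameterizes conics $Q$, and the $\PP^3$-fiber parameterizes an \emph{arbitrary} length-three subscheme of $L$; the limit sheaves are supported on the non-reduced quartic $2L+Q$ and fit into $\ses{\cO_{L\cup Q}}{F}{\cO_L}$. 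The ``degenerated rational cubic'' in Theorem \ref{thm:mainpropintro} is $L\cup Q$, not a general rational cubic, and the three collinear points are not the intersection of the cubic with a line. So $Z\not\cong\bM_3$: the center $\Gamma$ has dimension $7$, not $8$, and $p_R$ contracts $\PP^3$'s, not $\check{\PP}^2$'s. Your correction term $P(\bM_3)(t^2+t^4)$ should instead be $P(\PP^2)P(\PP^5)(t^2+t^4+t^6)$; these disagree (e.g.\ their values at $t=1$ are $42$ and $54$).

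Second, your computation of $P(\bM_4^{+})$ has a gap beyond the collinear stratum you flag: $\pi^{+}$ is \emph{not} a $\PP^5$-fibration over the whole non-collinear locus. By Lemma \ref{keylemm} it is locally trivial only away from $D_1\sqcup D_3$, and by Proposition \ref{specialfiber} the fiber over the non-curvilinear locus $D_3\cong\PP^2$ (ideals projectively equivalent to $\langle x^2,xy,y^2\rangle$, which are non-collinear) jumps to a $\PP^8$. This is exactly the paper's Macaulay2 computation, and it contributes $(P(\PP^8)-P(\PP^5))\,P(\PP^2)\neq 0$; omitting it gives a wrong polynomial even if everything else is fixed. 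Meanwhile, the stratum you single out as ``where essentially all the work lies'' is, once Proposition \ref{mainprop} is granted, the tame one: over $D_1$ the fiber of $\pi^{+}$ is the $\PP^5$ of conics $Q$ (with quartic $2L+Q$), by the description of $E$ above. Note also that the paper's own route avoids your decomposition entirely: it computes $P(\bN)=P(\bH[3])+(1-P(\PP^3))P(\PP^2)$ from the blow-down $t$ of Proposition \ref{prop:hilbcon}, and then applies additivity and multiplicativity to the fibration $\pi^{-}:\bM_4\to\bN$ downstairs, getting $P(\bM_4)=P(\PP^5)\bigl(P(\bN)-P(\PP^2)\bigr)+P(\PP^8)P(\PP^2)$, with no need for $P(\bM_4^{+})$ or a blow-down correction on $\bM_4^{+}$ at all.
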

\begin{question}
Find out a smooth resolution of the space $\bM_4$ and study its geometry in the view point of curve counting theory.
\end{question}
In the introduction part of the paper \cite{DH88}, S. Diaz and J. Harris has already mentioned the necessity of a \emph{good} compactification of the space $R_d$ for the intersection theory.
\subsection{Stream of the paper}
In section \ref{revelent}, we explicitly propose relevant spaces and review its properties. In section \ref{profthm}, we prove our main theorem by analyzing the wall-crossing of $\alpha$-stable pairs on $\PP^2$. In the last section, as presenting the special fibres by using \texttt{Macaulay2} (\cite{GS}), we calculate the Poincar\'e polynomial of $\bM_4$.

\textbf{Acknowledgement.}
The author would like to thank Dawei Chen for suggesting this topic and Jinhyung Park for helpful discussions and comments.
\section{Relevant moduli spaces}\label{revelent}
In this section, we collect known facts for the proof of Theorem \ref{thm:mainpropintro}
\subsection{Kronecker quiver and Hilbert scheme of cubes}\label{sub:defq}
Let $\bN:=\bN(3;2,3)$ be the moduli space of the space of sheaf homomorphisms (so called, the Kronecker quivers)
\begin{equation}\label{res1}
\cO_{\PP^2}(-2)^{\oplus 2}\longrightarrow \cO_{\PP^2}(-1)^{\oplus 3}
\end{equation}
up to the action of the automorphism group $\mathrm{GL}_{2}\times \mathrm{GL}_{3}/\CC^*$. It is known that $\bN$ is a smooth projective variety of dimension $6$ (\cite{Dre87}). On the other hand, let $Z$ be a finite subscheme of $\PP^{2}$ of length $3$. Since a resolution of a general ideal sheaf $I_Z(1)$ twisted by $\cO_{\PP^{2}}(1)$ is of the form \eqref{res1}, 
the moduli space $\bN$ is birational to the Hilbert scheme $\bH[3]$ of three points in $\PP^2$. 
The classification of locus in $\bH[3]$ are very well-knonwn. We list some loci which we will use later. Let $x,y,z$ be the homogeneous coordinates of $\PP^2$.
\begin{enumerate}[a)]
\item Let $D_1$ be the locus of \emph{colinear} three points on $\PP^2$ which is isomorphic to a $\PP^3$-bundle over $\PP^2$.
\item Let $D_2$ be the locus of \emph{curvelinear} three points which is isomorphic to $\CC^*\times \CC$-bundle over $\PP^2$. The general element is of the form $\langle x^2, xy, y^2-zx\rangle$.
\item Let $D_3:=\overline{D}_2 \setminus D_2$ be the locus of the \emph{non-curvelinear} three points which is isomorphic to $\PP^2$. The general element is of the form $\langle x^2,xy,y^2\rangle$.
\end{enumerate}
An explicit birational relation between $\bH[3]$ and $\bN$ has been studied in \cite{LQZ03}.
\begin{proposition}[\protect{\cite[Section 6]{LQZ03}}]\label{prop:hilbcon}
Under the above notation, there exists a smooth blow-down morphism
\[
	t: \bH[3]\longrightarrow \bN
\]
which contracts the fiber $\PP^3$ of the exceptional divisor $D_1$.
\end{proposition}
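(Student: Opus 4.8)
The plan is to exhibit $\bH[3]$ as the blow-up of $\bN$ along a smooth surface $S\cong\PP^2$, with $D_1$ as the exceptional divisor, and to read off $t$ as the blow-down. Both $\bH[3]$ and $\bN$ are smooth projective sixfolds (\cite{Dre87}), and the resolution \eqref{res1} of $I_Z(1)$ exists precisely when $Z$ is \emph{not} colinear; hence the birational identification already restricts to an isomorphism $\bH[3]\setminus D_1\xrightarrow{\ \sim\ }\bN\setminus S$ for a closed subset $S\subset\bN$ to be identified. It therefore suffices to extend this map to a morphism $t$ on all of $\bH[3]$, to show that $t$ contracts each fibre of $D_1$, and to recognise the result as a blow-up.

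To construct $t$ I would work with the universal subscheme $\cZ\subset\bH[3]\times\PP^2$ and its projection $\pi$ to $\bH[3]$. Since $h^0(I_Z(2))=3$ for \emph{every} length-$3$ scheme $Z$, cohomology and base change show that $\cB:=\pi_*\,\cI_{\cZ}(2)$ is locally free of rank $3$; over the non-colinear locus the three conics it parametrises, together with their two independent linear syzygies, reconstruct the presentation \eqref{res1} and hence the Kronecker module. The subtlety is that the syzygy sheaf $\cA:=\ker\bigl(\cB\otimes H^0(\cO_{\PP^2}(1))\to\pi_*\,\cI_{\cZ}(3)\bigr)$ has generic rank $2$ but fails to be locally free along $D_1$: for colinear $Z\subset\ell=\{L=0\}$ one computes $\cB_Z=L\cdot H^0(\cO_{\PP^2}(1))$, the multiplication map drops rank, and the fibre of the syzygy space jumps from $2$ to $3$. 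I would resolve this by passing to the associated family of Kronecker modules and checking that its fibre over every $[Z]$ is semistable, so that the moduli property of $\bN$ produces a morphism $t$ on all of $\bH[3]$; equivalently, one may invoke that a rational map from the smooth (hence normal) $\bH[3]$ to the proper $\bN$ is a morphism in codimension one and then propagate across the remaining locus.

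The decisive observation is that $\cB_Z=L\cdot H^0(\cO_{\PP^2}(1))$ depends only on the line $\ell$ and not on the three points of $Z$ lying on it. Consequently the semistable Kronecker module attached to a colinear $Z$ depends only on $\ell$, so $t$ sends the entire fibre $\mathrm{Hilb}^3(\ell)\cong\PP^3$ of $D_1$ over $[\ell]$ to a single point of $\bN$. This simultaneously identifies the image $S=t(D_1)$ with the dual plane $(\PP^2)^{\ast}\cong\PP^2$, a smooth surface of codimension $4$, and exhibits $D_1$ as a $\PP^3$-bundle over $S$ with $t|_{D_1}$ the bundle projection. I expect the main obstacle to lie exactly here: proving that the degenerating family of Kronecker modules remains semistable and that its $S$-equivalence class is constant along each $\PP^3$, since this is what forces the contraction and controls the geometry of $\bN$ transverse to $S$.

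Finally I would invoke the characterisation of a \emph{smooth} blow-down in the sense of the introduction: a birational projective morphism between smooth varieties that contracts an irreducible divisor $E$ to a smooth centre $S$, with $E\to S$ a $\PP^{\,\codim S-1}$-bundle and $\cO_E(E)$ restricting to $\cO(-1)$ on each fibre, is the blow-up of $S$. The one remaining input is the normal bundle computation $\cO_{D_1}(D_1)|_{\PP^3}\cong\cO_{\PP^3}(-1)$, which is the infinitesimal counterpart of the rank-jump of $\cA$ recorded above. Granting it, $t$ is identified with the blow-down $\mathrm{Bl}_S\bN\to\bN$ contracting the $\PP^3$-fibres of $D_1$, which proves the proposition.
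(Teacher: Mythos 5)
The paper offers no proof of this proposition at all: it is imported verbatim from \cite[Section 6]{LQZ03} (hence the citation in the statement), so your proposal must be judged as a reconstruction from scratch. Its architecture is reasonable and surely close in spirit to the cited source: build a family of semistable Kronecker modules over all of $\bH[3]$, obtain $t$ from the modular property of $\bN$, observe that $t$ collapses the $\PP^3$-fibres of $D_1$, and conclude via a smooth blow-down criterion. Several of your computations are also correct: $h^0(I_Z(2))=3$ for \emph{every} length-$3$ scheme, the jump of the fibrewise linear-syzygy space from $2$ to $3$ exactly along $D_1$, the identification of $t(D_1)$ with the dual plane, and the fact that $t|_{D_1}$ is the $\PP^3$-bundle projection.

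However, your decisive step contains a genuine gap. You claim the module attached to a colinear $Z$ depends only on the line $\ell$ \emph{because} $\cB_Z=L\cdot \rH^0(\cO_{\PP^2}(1))$ depends only on $\ell$. That is a non sequitur: a point of $\bN$ is a syzygy matrix, not a space of conics. Writing $W=\rH^0(\cO_{\PP^2}(1))$, the linear syzygies of $L\cdot W$ form the three-dimensional Koszul space $\wedge^2 W$; every $2$-plane in it equals $w\wedge W$ for a unique $[w]\in\PP(W)$; each such plane gives a \emph{stable} Kronecker module; and distinct $[w]$ give non-isomorphic modules, since the ideal of $2\times2$ minors of the module of $w\wedge W$ is $w\cdot(x,y,z)$. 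So knowing $\cB_Z$ leaves an entire $\PP^2$ of candidate limits, and the actual content of the proposition is that the limiting syzygy plane is precisely $L\wedge W$, independently of the three points \emph{and} of the path of degeneration. For instance, for $Z_t=\{[1{:}0{:}0],[0{:}1{:}0],[1{:}1{:}t]\}$ one finds the syzygy matrix $\left(\begin{smallmatrix} z& ty\\ tx& z\\ -x& -y\end{smallmatrix}\right)$, whose limit is the module of $z\wedge W$; but a one-parameter check proves nothing, and your proposal never establishes the statement in the $6$-dimensional family --- e.g.\ by showing that your kernel sheaf $\cA$ is locally free of rank $2$ with fibrewise injective map $\cA\otimes k([Z])\to \cB_Z\otimes W$ and computing that fibre in a local chart along $D_1$. (Your own phrasing concedes this is ``the main obstacle,'' but flagging it is not closing it.) Two further points: the ``equivalently'' clause is false as stated --- a rational map from a smooth variety to a proper variety extends in codimension one, but there is no general principle that lets you ``propagate across the remaining locus,'' and in any case what you need is not definedness along $D_1$ but constancy on its fibres, which no extension principle provides; and the normal-bundle input $\cO_{D_1}(D_1)|_{\PP^3}\cong\cO_{\PP^3}(-1)$ is granted rather than proved, even though it carries real content (the numerical constraints $K_{\bH[3]}=t^*K_{\bN}+aD_1$ together with the $\PP^3$-bundle structure alone do not force degree $-1$), so it too requires an argument, e.g.\ extracted from the same local computation of $\cA$ along $D_1$.
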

\subsection{The moduli of stable maps space}
Let $C$ be a projective connected reduced curve. A map $f: C \to \PP^r$ is called by \emph{stable} if $C$ has at worst nodal singularities and $|\mathrm{Aut}(f)|<\infty$. Let $\cK_d$ be the moduli space of isomorphism classes of stable maps $f:C\to X$ with genus $g(C)=0$ and $\mathrm{deg} (f)=d$. The space $\cK_d$ has been widely studied in \cite{FP97}. In special, the space is compact and naturally contains the space $R_d$ as a dense open subset because the normalization map of an irreducible rational curve has the trivial automorphism.
\subsection{Summary of the result in \cite{CC17}}
Let $X$ be a projective variety with a fixed ample line bundle $\cO_X(1)$. Let \[\chi(F(m))=\sum_{k=0}^{\dim X} \dim \rH^k(F(m))\] be the Hilbert polynomial of the coherent sheaf $F\in \mathrm{Coh}(X)$.
\begin{definition}
A pair $(s,F)$, $s\in \rH(F)$ is called \emph{$\alpha$-semistable} if $F$ is pure and for any subsheaf $F'\subset  F$, the inequality
\begin{equation}\label{slop}
	\frac{\chi(F'(m))+\delta\cdot\alpha}{r(F')} \le
	\frac{\chi(F(m))+\alpha}{r(F)}
\end{equation}
holds for $m\gg 0$. Here $\delta=1$ if the section $s$ factors through $F'$ and $\delta=0$ otherwise. When the strict inequality holds, $(s,F)$ is called an \emph{$\alpha$-stable} pair. 
\end{definition}
It is well-known that there exists a projective scheme $\rM^{\alpha}( P(m))$ parameterizing $S$-equivalence classes of $\alpha$-semistable pairs with Hilbert polynomial $P(m)$ (\cite[Theorem 4.12]{LP93b} and \cite[Theorem 2.6]{He98}). By decreasing the stability parameter $\alpha$, one can obtain a sequence of flips among the moduli spaces of $\alpha$-stable pairs. 

As we let $\alpha=0$ in \eqref{slop}, one can obtain the usual concept of semistable sheaves and thus the moduli space $\rM(P(m))$ which parameterizes $S$-equivalence classes of semistable sheaves on $X$ with Hilbert polynomial $P(m)$ (For construction and its properties, see \cite{Sim94}). 
In this paper, we deal with the case $X=\PP^2$ and $P(m) = 4m + 1$.
In \cite{CC17,CM15}, the authors studied the geometric properties of $\rM^{\alpha}(4m+1)$ and $\rM(4m+1)$ using the wall-crossing of pairs. It turns out that there exists a unique wall at $\alpha=3$ (\cite[Lemma 3.1]{CC17}). Let us summarize the result for later use.
\begin{enumerate}[(i)]
\item (\cite[Section 4.4]{He98} and \cite{PT10}) Let $\rM^{+}( 4m+1):=\rM^{\alpha}( 4m+1)$for $\alpha>3$. The moduli space $\rM_4^{+}$ of $\alpha$-stable pairs is isomorphic to the relative Hilbert scheme $\cH[3]$ of three points on the complete linear system $|\cO_{\PP^2}(4)|$. For $(s,F)$ be a stable pair, the corresponding point in the relative Hilbert scheme $\cH[3]$ is given by
\[
\rM^{+}\lr \cH[3], (s,F)\mapsto (Z,C), Z\subset C=\mathrm{Supp}(F), l(Z)=3.
\]
\item (\cite[Proposition 4.4]{CC17}) Let $\rM^{-}( 4m+1):= \rM^{\alpha}( 4m+1)$ for $\alpha<3$. The forgetful map $r : \rM^{-} \to \rM: (s,F)\mapsto F$ is a smooth blow-up along the Brill-Noether locus $\{[F] \in \rM_4\;|\; \dim \rH^{0}(F) = 2\}$.
\item (\cite[Section 3.1]{CC17}) The single flip over $\rM^{3}(4m+1)$ is a composition of a smooth blow-up and a smooth blow-down. Let us denote $Y^{+}$  by the blow-up center in $\rM^{+}$. Then $Y^{+}$ is isomorphic to a $\PP^3$-bundle over $\PP^2\times \PP^9$ where the base parameterizes the pair of line and cubic curves and the fiber parameterizes three points on that line. 
\end{enumerate} 
\begin{remark}\label{remsection}
The image of the blow-up center $Y^{-}$ in $\rM^{-}$ by the forgetful map $r$ in (ii) parameterizes the stable sheaf $F$ which fits into a non-split extension:
\[\ses{\cO_C}{F}{\cO_L}\]
for a line $L$ and a cubic curve $C$. By a simple computation, we see that the image $r(Y^{-})$ is isomorphic to a $\PP(\Ext_{\PP^2}^1(\cO_L,\cO_C))\cong\PP^2$-bundle over $\PP^2\times \PP^9$. Also, 
\[ h^0(F) = \left\{ \begin{array}{ll}
         1 & \mbox{if $[F]\in \PP^2\setminus \PP^1$};\\
        2 & \mbox{if $[F]\in \PP^1$}.\end{array} \right. \]        
and thus $Y^{-}\cong r(Y^{-})$ by (ii).
\end{remark}
In summary, there exists a commutative diagram
\begin{equation}\label{diagram1}
\xymatrix{&\widetilde{\rM(4m+1)}\ar[dl]_{p^+}\ar[dr]^{p^-}&\\
\rM^{+}(4m+1)\ar@{<-->}[rr]^{\text{a single flip}}\ar[dr]&&\rM^{-}(4m+1)\ar[dl]\ar[d]^{r}\\
&\rM^{3}(4m+1)&\rM(4m+1).}
\end{equation}
Let $\bM_4^{\alpha}$ be the closure of the space $R_4$ of $\rM_4^{\alpha}(4m+1)$ for each $\alpha\in \QQ^{+}$. Then the diagram \eqref{diagram1} has been restricted to
\begin{equation}\label{diagram}
\xymatrix{&\widetilde{\bM}_4\ar[dl]_{p^+}^{\cong}\ar[dr]^{p^-}&\\
\bM_4^{+}\ar@{=>}[dd]^{\pi^+}\ar@{<-->}[rr]\ar[dr]&&\bM_4^{-}\ar[dl]\ar[d]_{r}^{\cong}\\
&\bM_4^{0}&\bM_4\ar@{=>}[d]^{\pi^-}\\
\bH[3]\ar[rr]^{t}&&\bN,}
\end{equation}
where $\pi^{+}: \bM_4^{+}\lr \bH[3]$ is the restriction map of the structure morphism of the relative Hilbert scheme $\cH[3]\lr \bH[3]$. The remaining one of this paper is to prove that
\begin{itemize}
\item the restrictions of maps $p^{+}$ and $r$ are an isomorphism,
\item the composition of the restriction maps
\[
p_R:=r\circ p^{-}\circ (p^{+})^{-1} :\bM_4^{+}\lr \bM_4
\]
is a smooth blow-up map and 
\item the exists a generically bundle morphism $\pi^{-}:\bM_4 \lr \bN$ such that $\pi^{-}\circ p_R=t\circ \pi^{+}$.
\end{itemize}
\section{Proof of Theorem \ref{thm:mainpropintro}}\label{profthm}
In this section, we analyze the restricted diagram in \eqref{diagram1} during the wall-crossing. One of main method is to use the elementary modification of sheaves coming from the family of stable maps.
\subsection{Some lemmas via stable maps}
\begin{lemma}[\protect{\cite[Example 6.1]{CC11}}]\label{stable}
For the irreducible plane curve $C$, let $f:C^v\lr C$ be the normalization map and $s\in\Hom(\cO_C, f_*\cO_{C^v})\cong \rH^0(\cO_{C^v})=\CC$ be the canonical section. Then the pair $(s,f_*\cO_{C^v})$ is $\alpha$-stable for all $\alpha\in \QQ^{+}$. In special, $f_*\cO_{C^v}$ is stable.
\end{lemma}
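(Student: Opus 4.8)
The strategy is to exploit the irreducibility of $C$, which forces every nonzero subsheaf of $F:=f_{*}\cO_{C^v}$ to have maximal rank, so that the $\alpha$-stability inequality \eqref{slop} collapses to a comparison of Euler characteristic constants. First I would record the basic invariants of $F$. Since $f$ is finite and $C^v$ is a smooth irreducible curve, $F=f_{*}\cO_{C^v}$ is a pure sheaf of dimension $1$ with $\mathrm{Supp}(F)=C$ and generic rank $1$ along $C$; hence its Hilbert polynomial has leading coefficient $r(F)=\deg C=d$. The identification $\Hom(\cO_C,f_{*}\cO_{C^v})\cong \rH^0(\cO_{C^v})=\CC$ is adjunction together with the irreducibility of $C^v$, and the canonical section $s\in \rH^0(F)$ is the image of $1\in\rH^0(\cO_{C^v})$.

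The key step is the claim that every nonzero subsheaf $F'\subset F$ satisfies $r(F')=r(F)=d$. Indeed, purity of $F$ rules out $0$-dimensional subsheaves, so $\dim\mathrm{Supp}(F')=1$; since $C$ is irreducible this forces $\mathrm{Supp}(F')=C$ set-theoretically, and because $F$ has generic rank $1$ the subsheaf $F'$ has generic rank exactly $1$ as well. Consequently the quotient $F/F'$ is $0$-dimensional, of some length $\ell:=\chi(F/F')\ge 0$, with $\ell\ge 1$ precisely when $F'\subsetneq F$ is proper. Writing $\chi(F(m))=dm+\chi_{0}$, we get $\chi(F'(m))=dm+\chi_{0}-\ell$, so after dividing by the common rank $d$ the leading terms in \eqref{slop} cancel.

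With this reduction the stability inequality, for a proper nonzero subsheaf $F'$, becomes
\[
-\ell+\delta\,\alpha<\alpha,\qquad\text{i.e.}\qquad -\ell<(1-\delta)\alpha .
\]
If $\delta=0$ (the section does not factor through $F'$) this reads $-\ell<\alpha$, which is automatic since $-\ell\le 0<\alpha$. If $\delta=1$ (the section factors through $F'$) it reads $-\ell<0$, which holds because $F'\subsetneq F$ forces $\ell\ge 1$. Hence the strict inequality holds for every proper nonzero subsheaf and every $\alpha\in\QQ^{+}$, giving $\alpha$-stability. For the stability of the sheaf $F$ itself one sets $\alpha=0$ and compares reduced Hilbert polynomials: equality of ranks again reduces the comparison to $-\ell/d<0$, which holds for every proper subsheaf, so $F$ is stable.

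The only genuine content is the rank-constancy of subsheaves, which is where irreducibility of $C$ and purity of $F$ are both essential; everything afterward is the bookkeeping above and is independent of the genus of $C^v$ (it enters only through $\chi_{0}$, which drops out). The main point to verify with care is therefore the purity of $f_{*}\cO_{C^v}$ and the adjunction computing $\rH^0(F)$; once these are in place the case analysis on $\delta$ is immediate.
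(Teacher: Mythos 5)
Your proof is correct, but there is no in-paper argument to compare it with: the paper states this lemma as a quotation of \cite[Example 6.1]{CC11} (see also \cite[Proposition 3.18]{CC11} cited in the introduction) and gives no proof, so your write-up supplies exactly what the author delegates to the reference. The mechanism you use is the standard one and you execute it correctly: purity of $F=f_*\cO_{C^v}$ together with irreducibility of $C$ forces every nonzero subsheaf $F'\subset F$ to have support equal to $C$ and generic rank $1$, hence full multiplicity $r(F')=r(F)=d$; the quotient $F/F'$ is then $0$-dimensional of length $\ell=\chi(F/F')$, with $\ell\ge 1$ exactly when $F'$ is proper, and the inequality \eqref{slop} collapses to $-\ell<(1-\delta)\alpha$, which holds for $\delta=1$ because $\ell\ge 1$ and for $\delta=0$ because $\alpha>0$. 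Your remark that the genus of $C^v$ is irrelevant (it only shifts the constant term, which cancels) is also right, and it shows the statement needs no rationality hypothesis. The one step you correctly flag but should spell out is purity itself: a $0$-dimensional subsheaf of $f_*\cO_{C^v}$ would give sections of $\cO_{C^v}$ over some $f^{-1}(U)$ annihilated by nonzero functions pulled back from $C$, which is impossible since $C^v$ is integral and $f$ is dominant; likewise the identification $\Hom(\cO_C,f_*\cO_{C^v})\cong\rH^0(\cO_{C^v})=\CC$ is adjunction plus connectedness of $C^v$ (the normalization of an irreducible curve is irreducible). With those two routine verifications made explicit, your argument is complete and self-contained.
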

The correspondence between $R_4$ and the relative Hilbert scheme $\cH[3]$ is given by the following way. For the curve $C$, the direct image sheaf $f_*\cO_{C^v}$ fits into the short exact sequence
\[\ses{\cO_C}{f_*\cO_{C^v}}{\cQ}\]
such that $\dim \cQ=0$. By taking the dual $\cHom^{\bullet} (-, \cO_C)$ in the exact sequence,
\begin{equation}\label{short}
0\lr \cHom_C (f_*\cO_{C^v},\cO_C)\lr \cHom_C (\cO_C, \cO_C)=\cO_C \lr \cExt_C^1(\cQ, \cO_C)\lr 0
\end{equation}
we have a pair $(Z,C)$, $Z\subset C$ such that the subscheme $Z$ is defined by the (\emph{conductor}) ideal sheaf $I_{Z,C}=\cHom_C (f_*\cO_{C^v},\cO_C)$ (cf. \cite[Proposition B.5]{PT10}).
\begin{example}\label{conexam}
Let $f:\PP^1\lr \PP^2; [s:t]\mapsto [g_3(s,t)s:g_3(s,t)t:g_4(s,t)]$ be a map such that $g_i(s,t)$ are homogenous polynomial with respect to $s,t$ of degree $i$.
Then it is straightforward to check that the conductor ideal sheaf is defined by\[I_{Z}=\langle x^2,xy,y^2\rangle\]where $x,y,z$ are homogenous coordinates of $\PP^2$.
\end{example}
Recall that $\bM_4$ (resp. $\bM_4^{\alpha}$) is the closure of $R_4$ in $\rM(4m+1)$ (resp. $\rM^{\alpha}(4m+1)$) respectively.
\begin{proposition}\label{sectionspace}
For each closed point $[F]\in \bM_4$, 
\[\mathrm{dim}\rH^0(F)=1.\] 
Hence the forgetful map  $r$ is an isomorphism $\bM_4^{-}\stackrel{r}{\cong} \bM_4$.
\end{proposition}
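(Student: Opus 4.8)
The plan is to prove the numerical statement $\dim\rH^0(F)=1$ first, and then read off the isomorphism from item (ii) of the summary. On the dense open locus $R_4\subset\bM_4$ the sheaf is $F=f_*\cO_{C^v}$ with $C^v\cong\PP^1$, so by the projection formula $\rH^0(\PP^2,F)=\rH^0(\PP^1,\cO_{\PP^1})=\CC$ and $\dim\rH^0(F)=1$; this is exactly the content of Lemma \ref{stable}. Since $F$ is supported in dimension $1$ we have $\rH^2(F)=0$, and $\chi(F)=1$ forces $\dim\rH^0(F)=1+\dim\rH^1(F)$, so the claim is equivalent to $\rH^1(F)=0$. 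As $\dim\rH^0$ is upper semicontinuous on $\rM(4m+1)$, the inequality $\dim\rH^0(F)\ge 1$ holds on all of $\bM_4$, and it remains only to bound $\dim\rH^0(F)\le 1$ on the boundary $\bM_4\setminus R_4$.

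To control a boundary point I would degenerate through stable maps. Fix $[F_0]\in\bM_4$ and a smooth pointed curve $(T,0)$ with a morphism $T\to\bM_4$ sending $T\setminus\{0\}$ into $R_4$ and $0\mapsto[F_0]$. By properness of $\cK_4$, after a finite base change the induced family of normalization maps over $T\setminus\{0\}$ extends to a family of stable maps $f:\mathcal{C}\to\PP^2\times T$ whose central fibre $f_0:C_0\to\PP^2$ has $C_0$ a connected nodal curve of arithmetic genus $0$ (a tree of $\PP^1$'s). The essential point is then the computation
\[
\dim\rH^0\bigl(\PP^2,(f_0)_*\cO_{C_0}\bigr)=\dim\rH^0(C_0,\cO_{C_0})=1,
\]
which holds because $C_0$ is connected of genus $0$ and is insensitive to contracted components or to multiple-cover behaviour of $f_0$.

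It then remains to compare the flat limit $F_0$, the actual point of $\rM(4m+1)$, with the naive push-forward $G_0:=(f_0)_*\cO_{C_0}$. The two families agree over $T\setminus\{0\}$, so $F_0$ and $G_0$ differ only by a $0$-dimensional elementary modification; realizing $F_0$ as a subsheaf, or as the purification, of $G_0$ yields $\dim\rH^0(F_0)\le\dim\rH^0(G_0)=1$, which together with $\dim\rH^0(F_0)\ge 1$ gives $\dim\rH^0(F_0)=1$. I expect this comparison to be the main obstacle: $G_0$ need be neither pure nor Simpson-semistable, and one must verify that the elementary modification carrying $G_0$ to its stable representative does not create extra sections. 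Concretely, writing the canonical section as $\ses{\cO_C}{F_0}{\cQ}$ with $\dim\cQ=0$, the condition $\dim\rH^0(F_0)=1$ is equivalent to the length-$3$ conductor scheme $\cQ$ being non-collinear, as in Example \ref{conexam}; the degeneration argument is precisely what rules out the collinear configurations, which are exactly those producing $\dim\rH^0=2$.

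Finally, granting $\dim\rH^0\equiv 1$ on $\bM_4$, the closure $\bM_4$ is disjoint from the Brill--Noether centre $\{[F]\in\rM_4:\dim\rH^0(F)=2\}$ of the smooth blow-up $r:\rM^{-}\to\rM$ of item (ii). A blow-up is an isomorphism over the complement of its centre, and $\bM_4^{-}$, being the closure of the lift of $R_4$, is contained in $r^{-1}(\bM_4)\cong\bM_4$; hence $r$ restricts to an isomorphism $\bM_4^{-}\stackrel{\cong}{\longrightarrow}\bM_4$.
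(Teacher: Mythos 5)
Your overall strategy is the same as the paper's: degenerate a boundary point $[F_0]\in\bM_4$ through the stable-map space $\cK_4$, use that $\rH^0((f_0)_*\cO_{C_0})=\rH^0(\cO_{C_0})=\CC$ for the connected genus-zero central fiber, and transfer this to the flat limit $F_0$; the semicontinuity lower bound and the final deduction of $\bM_4^{-}\cong\bM_4$ from item (ii) are also fine. The gap is exactly at the step you yourself flag as the main obstacle, and your proposed resolution of it cannot work. You cannot realize $F_0$ as a subsheaf, or as the purification, of $G_0:=(f_0)_*\cO_{C_0}$: both sheaves have Hilbert polynomial $4m+1$, and an injection between coherent sheaves with equal Hilbert polynomial is automatically an isomorphism (a proper subsheaf has strictly smaller polynomial for $m\gg0$), while quotienting $G_0$ by torsion would change the Hilbert polynomial unless the torsion is zero. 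So your comparison is either vacuous ($F_0\cong G_0$, which fails precisely at the interesting boundary points, e.g.\ the locus $\Gamma$ of Proposition \ref{mainprop} where the pushforward is unstable) or unavailable. The actual relation between $G_0$ and the stable limit is Langton's semistable reduction: one repeatedly replaces the family $\cF$ by $\cF'=\ker(\cF\lr i_*H)$, where $H=F/G$ is the quotient of the current central fiber $F$ by its maximal destabilizing subsheaf $G$, and the new central fiber sits in the \emph{reversed} extension $\ses{H}{\cF'|_0}{G}$. This is not a subsheaf of $F$, and there is no a priori inequality $h^0(\cF'|_0)\le h^0(F)$; if such an extension were split and $H$ had sections, $h^0$ would go up.

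What closes the gap is the cohomological argument of the paper's Lemma \ref{flat}, which shows the modification cannot create sections. Writing $\chi(H(m))=d'm+\chi'$, the destabilizing property of $G$ gives $\chi'\le d'/d<1$; since $h^1(F)=0$ (because $h^0(F)=\chi(F)=1$) and $\rH^2$ of the $1$-dimensional sheaf $G$ vanishes, one gets $h^1(H)=0$, hence $h^0(H)=\chi'\ge0$, forcing $\chi'=0$, i.e.\ $h^0(H)=0$; then the sequence $\rH^0(H)=0\lr\rH^1(G)\lr\rH^1(F)=0$ gives $h^1(G)=0$, so the reversed extension satisfies $h^1(\cF'|_0)=0$ and therefore $h^0(\cF'|_0)=\chi=1$, and one iterates until the central fiber is stable. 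Your closing remark that ``the degeneration argument is precisely what rules out the collinear configurations'' is circular: the collinear conductor schemes are exactly the sheaves with $h^0=2$, so excluding them \emph{is} the statement to be proved, not an input. Without an argument of the type of Lemma \ref{flat}, your proof does not go through.
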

\begin{proof}
Let $\cK_{4}$ be the moduli space of stable maps of degree $4$ into $\PP^2$.
Let
\[
\xymatrix{\cC\ar[r]^{\mathbf{f}}\ar[d]^{\pi}&\PP^2\\
\cK_4&}
\]
be a flat family of stable maps parameterized by $\cK_4$.
Let $\Phi:=(\pi,\mathbf{f}):\cC\lr \cK_4\times \PP^2$ be the push-out map.
Then the direct image sheaf $\cF:={\Phi}_*\cO_{\cC}$ is the flat family of pure sheaves parameterized by $\cK_4$. The general sheaves parameterized by $\cK_4$ is stable by Lemma \ref{stable}. Also, $\rH^0(\cF|_{[f]\times \PP^2})\cong\rH^0(f_*\cO_C)= \CC$.  The claim is proved by the below Lemma \ref{flat}.
\end{proof}
\begin{lemma}\label{flat}
Let $S$ be a small disk in $\CC$ containing $0$.
Let $\cF$ be a flat family of sheaves on $$\PP^r\times S\lr S$$ with Hilbert polynomial $\chi(\cF(m))=dm+1$.
Let $\cF$ be stable sheaves over $S$ except the origin $0\in S$. Furthermore, $h^0(\cF|_s)=1$ for all $s\in S$.
Then the modified \emph{stable} sheaf at the origin has a unique non-zero global section.
\end{lemma}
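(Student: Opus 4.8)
The plan is to reduce the assertion to a vanishing of $\rH^1$ and then to pin it down by playing the purity of the stable limit against a cohomological identity. First I would record that, since $\chi(\cF_0(m))=dm+1$, the central fibre satisfies $\chi(\cF_0)=1$; as $\cF_0$ is supported in dimension $\le 1$ we have $\rH^2(\cF_0)=0$, so the hypothesis $\dim\rH^0(\cF_0)=1$ forces $\rH^1(\cF_0)=0$, and the desired equality ``$h^0=1$'' for the stabilisation is equivalent to $\rH^1=0$ for it. By Langton's theorem (after a finite base change of $S$ if necessary) the stable limit $\cG_0$ is the central fibre of a flat family $\cG\subseteq\cF$ that agrees with $\cF$ over $S\setminus\{0\}$, the quotient $\cT:=\cF/\cG$ being supported on the central fibre $\PP^r\times\{0\}$.

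Next I would restrict the exact sequence $0\to\cG\to\cF\to\cT\to 0$ to the origin. Since $\cF$ and $\cG$ are flat over $S$ and $\cT$ is supported on the central fibre (so that $\mathrm{Tor}_1^{\cO_S}(\cT,\CC_0)\cong\cT_0$), the derived restriction collapses to a four term exact sequence $0\to \cT_0\to \cG_0\to \cF_0\to \cT_0\to 0$. Setting $\cN:=\im(\cG_0\to\cF_0)$, this breaks into $0\to\cT_0\to\cG_0\to\cN\to 0$ and $0\to\cN\to\cF_0\to\cT_0\to 0$, with one and the same defect sheaf $\cT_0$ occurring both as a subsheaf of the stable limit $\cG_0$ and as a quotient of $\cF_0$.

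The heart of the matter, and the step I expect to be the real obstacle, is to show $\rH^0(\cT_0)=0$: purely cohomological bookkeeping cannot achieve this, since in general $h^0$ is free to jump up at the origin, and one must feed in the stability of the limit in an essential way. I would argue by a two sided squeeze on $\chi(\cT_0)$. On one hand, the sequence $0\to\cN\to\cF_0\to\cT_0\to 0$ together with $\rH^1(\cF_0)=0$ and $\rH^2(\cN)=0$ (as $\cN\subseteq\cF_0$ is supported in dimension $\le 1$) gives $\rH^1(\cT_0)=0$, whence $\chi(\cT_0)=\dim\rH^0(\cT_0)\ge 0$. On the other hand $\cT_0$ is a subsheaf of the stable, hence pure, sheaf $\cG_0$, so it is either zero or pure of dimension one of some multiplicity $r\le d$; stability of $\cG_0$ then forces $\chi(\cT_0)/r<1/d$, i.e. $\chi(\cT_0)<r/d\le 1$ and so $\chi(\cT_0)\le 0$ (the degenerate case $\cT_0=\cG_0$ being harmless, as it yields $\cF_0\cong\cG_0$ directly). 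Combining the two bounds gives $\chi(\cT_0)=0$ and therefore $\rH^0(\cT_0)=0$.

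Finally I would conclude. With $\rH^0(\cT_0)=\rH^1(\cT_0)=0$, the sequence $0\to\cT_0\to\cG_0\to\cN\to 0$ gives $\rH^0(\cG_0)\cong\rH^0(\cN)$, while $\cN\subseteq\cF_0$ gives $\dim\rH^0(\cN)\le\dim\rH^0(\cF_0)=1$. Since $\cG$ is flat over $S$ with $\dim\rH^0(\cG_s)=1$ for $s\neq 0$, upper semicontinuity yields $\dim\rH^0(\cG_0)\ge 1$. Hence $\dim\rH^0(\cG_0)=1$, i.e. the modified stable sheaf at the origin carries a unique nonzero global section up to scalar, as claimed.
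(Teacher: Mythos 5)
Your strategy---running Langton's algorithm in one shot and playing stability of the limit $\cG_0$ against the cohomology of $\cF_0$---is genuinely different from the paper's, but it has a real gap at exactly the point you flag as the heart of the matter. The gap is the claim that the derived restriction collapses to $0\to\cT_0\to\cG_0\to\cF_0\to\cT_0\to 0$ with \emph{the same} sheaf $\cT_0$ at both ends, i.e.\ that $\mathrm{Tor}_1^{\cO_S}(\cT,\CC_0)\cong\cT\otimes_{\cO_S}\CC_0$. This identification holds precisely when $\cT$ is annihilated by the uniformizer $t$, i.e.\ when the stable limit is reached after a \emph{single} elementary modification. In general Langton's process takes several steps, so $\cT=\cF/\cG$ is only killed by a power of $t$, and then the two sheaves need not agree: for instance $\cG=t\cdot\cI_{(p,0)}\subset\cO_{\PP^1\times S}=\cF$ is $S$-flat and agrees with $\cF$ off the origin, yet $\mathrm{Tor}_1^{\cO_S}(\cT,\CC_0)\cong\cO_{\PP^1}(-1)\oplus\CC_p$ while $\cT\otimes\CC_0\cong\cO_{\PP^1}$. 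What survives without the identification is only the equality of Euler characteristics of the two end terms (both families are flat with equal generic fibres, so $\chi(\cG_0)=\chi(\cF_0)$); this does rescue your squeeze, giving $\chi=0$ for both and $\rH^0(\cT\otimes\CC_0)=0$. But your final step needs $\rH^0\bigl(\mathrm{Tor}_1^{\cO_S}(\cT,\CC_0)\bigr)=0$, and this does \emph{not} follow from $\chi=0$ plus stability of $\cG_0$: a subsheaf with $\chi=0$ of a stable sheaf can carry a section (e.g.\ $\cO_C\subset\cG_0$ for a plane cubic $C$ has $\chi=0$, $h^0=1$, and violates no stability inequality when $d=4$). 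Note also that "stable with $\chi=1$ implies $h^0=1$" is genuinely false in this moduli space---the Brill--Noether locus $h^0=2$ is nonempty (it is the blow-up centre of the map $r$)---so any correct proof must use the degeneration itself at this spot, and yours does not. (Your dismissal of the degenerate case $\cT_0=\cG_0$ is also imprecise---it gives $\cN=0$, not $\cF_0\cong\cG_0$---though that case can be excluded by normalizing $\cG\not\subseteq t\cF$.)

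The paper's proof avoids this issue by doing what your one-shot argument cannot: it follows \cite[Theorem 2.B.1]{HuLe10} one elementary modification at a time, where the quotient \emph{is} killed by $t$ and no Tor identification is needed. Crucially, since the intermediate central fibres are not stable, the numerical bound there comes not from stability of the limit but from \emph{maximality} of the destabilizing subsheaf $G\subset F=\cF|_0$: writing $\chi(H(m))=d'm+\chi'$ for $H=F/G$, maximality gives $\chi'\le d'/d$, hence $\chi'\le 0$, while $h^1(F)=0$ gives $h^0(H)=\chi'\ge 0$; then $\chi'=0$, both $h^1(H)$ and $h^1(G)$ vanish, the new central fibre $0\to H\to\cF'|_0\to G\to 0$ again has $h^1=0$, and the property $h^0=1$ propagates by induction through Langton's algorithm. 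If you try to repair your argument, you will find you must prove $\rH^0(\mathrm{Tor}_1^{\cO_S}(\cT,\CC_0))=0$ by just such a step-by-step induction, which collapses your proof back into the paper's.
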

\begin{proof}
Following the proof of \cite[Theorem 2.B.1]{HuLe10}, it is enough to show that, after the single elementary modification of sheaf $\cF$ along $0\in S$, the resulting center fiber has a one-dimensional global section space.
Let $$G\subset F:=\cF|_0$$ be the non-zero maximal destabilizing subsheaf of $F$. Then we have a short exact sequence
$$
\ses{G}{F}{H:=F/G}.
$$
Let $$\chi(H(m))=d'm+\chi' \mbox{ and } \chi(G(m))=(d-d')m+1-\chi'.$$ Then by the maximality of $G$, we have
$$
\frac{1-\chi'}{d-d'}\geq\frac{1}{d} \Leftrightarrow \frac{d'}{d}\geq \chi'.
$$
Since $h^1(F)=0$ and $G$ is supported on a curve in $\PP^r$, $h^1(H)=0$. That is, $h^0(H)=\chi'\geq 0$. Also, by above inequality, $\chi'\leq1$. If $\chi'=1$, then it is contradict to the assumption of $G$. Thus $h^0(H)=0$. This implies that $h^1(G)=0$ because $h^1(F)=0$.
Now, for the modified sheaf $\cF'$ on $\PP^r\times S$, the central fiber fits into the short exact sequence
$$
\ses{H}{\cF'|_{\PP^r\times \{0\}}}{G}.
$$
Therefore $h^1(\cF'|_{\PP^r\times \{0\}})=0$. That is, $h^0(\cF'|_{\PP^r\times \{0\}})=1$.
\end{proof}

\subsection{Proof of Theorem \ref{thm:mainpropintro}}
One of simple observation is that the canonical form of rational quartic plane curve with the prescribed singular points ($[1:0:0], [0:1:0], [0:0:1]$) can be written as
\[
F(x,y,z)=a_0x^2y^2+a_1y^2z^2+a_2z^2x^2+a_3x^2yz+a_4xy^2z+a_5xyz^2
\]
where $x,y,z$ are the homogenous coordinate of $\PP^2$ and $a_0,a_1, \cdots, a_5\in \CC$. 
Hence the projection map $\pi^+: \bM_4^{+}\lr \bH[3]$ in the diagram \eqref{diagram} is generically $\PP^5$-fiberation over $\bH[3]$.
\begin{lemma}\label{keylemm}
The map $$\pi^{+}: \bM_4^{+}\lr \bH[3]$$has a $\PP^5$-fibration over $\bH[3]\setminus D_1\sqcup D_3$.
\end{lemma}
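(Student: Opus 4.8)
The plan is to describe each fibre of $\pi^{+}$ as a linear system of quartics and to control its dimension stratum by stratum. For $Z\in\bH[3]$ write $\cI_Z$ for its ideal sheaf; I claim that over $U:=\bH[3]\setminus(D_1\sqcup D_3)$ the fibre $(\pi^{+})^{-1}(Z)$ is the system $\PP(\rH^0(\cI_Z^2(4)))$ of quartics that are doubly singular along $Z$. When $Z=\{p_1,p_2,p_3\}$ is three distinct non-collinear points this is exactly the space of quartics with a node at each $p_i$, which the canonical form preceding the lemma exhibits as the $6$-dimensional space $\langle x^2y^2,\,y^2z^2,\,z^2x^2,\,x^2yz,\,xy^2z,\,xyz^2\rangle$; hence the generic fibre is $\PP^5$, and the whole point is to propagate this over all of $U$.

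First I would set up a closure argument. The key observation is that among length-$3$ schemes, $Z$ is curvilinear if and only if $\mathrm{length}(\cO_{\PP^2}/\cI_Z^2)=9$; indeed the unique non-curvilinear type is the fat point $\langle x^2,xy,y^2\rangle$ of $D_3$, for which $\cI_Z^2=\mathfrak m^4$ has colength $10$. Since $U$ is precisely the curvilinear non-collinear locus, $\cI_Z^2$ is flat of colength $9$ there, so the length-$9$ subscheme $Z^{(2)}:=V(\cI_Z^2)$ forms a flat family over $U$ and $\cO_{Z^{(2)}}(4)$ pushes forward to a rank-$9$ bundle. Consequently the double-vanishing condition cuts out a closed incidence subscheme $\cW\subset\cH[3]|_U$, and because $\bM_4^{+}$ is the closure of $R_4$ and the condition is satisfied on the dense three-nodal locus, I get $\bM_4^{+}|_U\subseteq\cW$.

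Next I would show $h^0(\cI_Z^2(4))=6$ on all of $U$, equivalently that $Z^{(2)}$ imposes $9$ independent conditions on quartics, by checking surjectivity of $\rH^0(\cO_{\PP^2}(4))\to\rH^0(\cO_{Z^{(2)}})$ on each curvilinear stratum. For three distinct points, or a double point plus a simple point (local model $\cI_{Z}=\langle x,y^2\rangle$ at the double point), the conditions are spread over distinct points with jets of order at most one, resp. governed by $\langle x^2,xy^2,y^4\rangle$ whose standard monomials all have degree $\le 3$, so surjectivity is immediate. The crux is the punctual curvilinear stratum $D_2$, modelled by $\cI_Z=\langle x-y^2,\,y^3\rangle$: here $Z^{(2)}$ carries a standard monomial $y^5$ of degree $5>4$, which is nevertheless realised by the quartic $xy^3$ through the syzygy $xy^3\equiv y^5 \bmod \cI_Z^2$ --- and it is exactly the available degree $4$ that makes this work. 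By contrast the excluded loci genuinely fail: when $Z$ is collinear a B\'ezout count on the spanning line (which every doubly-singular quartic must then contain) raises the system to $\PP^6$, while on $D_3$ the colength-$10$ jump lowers it to $\PP^4$. This independence statement over the non-reduced strata is the step I expect to be the main obstacle, and is where an explicit \texttt{Macaulay2} verification on the normal forms is most useful.

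Finally I would assemble the pieces. The independence of the previous step makes $\rH^0(\cI_Z^2(4))$ a rank-$6$ subbundle of the trivial bundle with fibre $\rH^0(\cO_{\PP^2}(4))$, so $\cW\to U$ is a $\PP^5$-bundle and $\cW$ is irreducible of dimension $6+5=11$. On the other hand $\bM_4^{+}|_U=(\pi^{+})^{-1}(U)$ is a non-empty open, hence dense, subset of the irreducible $11$-dimensional variety $\bM_4^{+}$. Since $\bM_4^{+}|_U$ is a closed, irreducible, $11$-dimensional subvariety of the irreducible $11$-dimensional $\cW$, the two coincide, and therefore $\pi^{+}:\bM_4^{+}|_U\to U$ is the $\PP^5$-bundle $\cW\to U$. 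This gives the asserted $\PP^5$-fibration over $\bH[3]\setminus(D_1\sqcup D_3)$.
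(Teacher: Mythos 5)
Your proof is correct and takes essentially the same approach as the paper: both identify the fibre over $Z$ with $\PP(\rH^0(I_Z^2(4)))$, reduce everything to the computation $h^0(I_Z^2(4))=6$ on $\bH[3]\setminus (D_1\sqcup D_3)$ (which the paper dismisses as ``straightforward to check'' and you carry out stratum by stratum, including the key syzygy $xy^3\equiv y^5$ on the punctual curvilinear stratum), and then use the irreducibility and dimension of $\bM_4^{+}=\overline{R_4}$ to conclude; your only real deviation is that you obtain equality with the $\PP^5$-bundle by a one-sided inclusion plus a dimension count, where the paper instead asserts the reverse inclusion $\PP(\cE )\subset \bM_4^{+}$ outright. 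One remark in your favour: your boundary values, $h^0=7$ on the collinear locus $D_1$ (B\'ezout forces the spanning line as a component, leaving cubics through three points) and $h^0=5$ on the fat-point locus $D_3$ (quartics in $\mathfrak{m}^4$), are the correct ones, whereas the paper's table lists $5$ on $D_1$ and $7$ on $D_3$ --- an apparent transposition, which affects neither proof since all that matters is that the rank jumps away from $6$ on the excluded loci.
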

\begin{proof}
Let $\cZ$ be the universal subscheme of $\bH[3]$ and $q:\bH[3]\times \PP^2\lr \bH[3]$ be the projection onto the first component. Then,
the push-forward sheaf $\cE:=q_* (I_{\cZ}^2\boxtimes \cO_{\PP^2}(4))$ is locally free of rank $6$ away from the union $D_1\sqcup D_3$. Indeed, it is straightforward to check that 
\[ h^0(I_Z^2(4)) =\left\{ \begin{array}{ll}
         6 & \mbox{if $[Z]\in \bH[3]\setminus D_1\sqcup D_3$};\\
        5 & \mbox{if $[Z]\in D_1$};\\
        7 & \mbox{if $[Z]\in D_3$}.\end{array} \right. \]
and thus the claim is proved. Now let $\PP(\cE):=\proj(\mathrm{Sym}^{\bullet} (\cE))\lr \bH[3]$ be the projective scheme over $\bH[3]$. Then $\PP(\cE)$ is a subscheme of $\bM_4^{+}$ (cf. \cite{He98}). Since $R_4$ is irreducible and the canonical form of rational quartic curve, $R_4$ is a subscheme of $\PP(\cE)$. Thus $\bM_4^{+}=\overline{R_4}\subset \PP(\cE)$. Therefore $\bM_4^{+}$ is also a $\PP^5$-fiberation away from $D_1\sqcup D_3$.
\end{proof}

\begin{proposition}\label{mainprop}
\begin{enumerate}
\item The map \[\pi^{-}:\bM_4\lr \bN\] is a $\PP^5$-fiberation over $\bN\setminus t(D_3)$.
\item Furthermore, the birational morphism $p_R: \bM_4^{+}\lr \bM$ is a smooth blow-down of $(\pi^{+})^{-1}(D_1)$ such that there exists a commutative diagram
\[
\xymatrix{\bM_4^{+}\ar[d]_{\pi^{+}}\ar[r]^{p_R}&\bM_4\ar[d]^{\pi^{-}}\\
\bH[3]\ar[r]^{t}&\bN.}
\]
\end{enumerate}
\end{proposition}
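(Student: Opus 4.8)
The plan is to identify $p_R$ with the restriction of the wall-crossing blow-down $p^{-}$, to read off its centre and exceptional divisor from the conductor of the limiting sheaves, and to obtain the fibration $\pi^{-}$ by descent. Since $p^{+}$ is an isomorphism (see \eqref{diagram}) and, by Proposition \ref{sectionspace}, so is $r$, the morphism $p_R=r\circ p^{-}\circ(p^{+})^{-1}$ is the restriction of the smooth blow-down $p^{-}$ of item (iii) to $\widetilde{\bM}_4\cong\bM_4^{+}$. As the flip exceptional divisor $E$ is a divisor, its trace $E\cap\widetilde{\bM}_4$ is a divisor in $\widetilde{\bM}_4$, and the task is to show this divisor equals $(\pi^{+})^{-1}(D_1)$ and that $p^{-}$ collapses it as a smooth blow-down.

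\emph{The exceptional divisor over $D_1$.} Over $[Z]\in D_1$ the conductor consists of three colinear points on a line $L$; since an irreducible curve with three colinear double points would meet $L$ in degree $\ge 6$, B\'ezout forces the support of every limit over $D_1$ to contain $L$. Hence the limiting sheaf is supported on a reducible quartic $L\cup C'$ and sits in an extension $0\to\cO_{C'}\to F\to\cO_L\to 0$ as in Remark \ref{remsection}, so $(\pi^{+})^{-1}(D_1)$ is the wall-crossing locus whose base $\PP^2\times\PP^9$ records the pair $(L,C')$ of item (iii); the $\PP^3$ of three points on $L$ appearing in $Y^{+}$ is exactly the fibre of $D_1\to\PP^2$ that Proposition \ref{prop:hilbcon} contracts.

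\emph{The contraction and the commuting square.} On the flip side, $p^{-}$ collapses $E$ onto $Y^{-}=\PP(\Ext^1_{\PP^2}(\cO_L,\cO_{C'}))$ by forgetting the three points on $L$; restricting to the closure, $p_R$ contracts $(\pi^{+})^{-1}(D_1)$ along these $\PP^3$'s onto the centre $Y^{-}\cap\bM_4$. The square $\pi^{-}\circ p_R=t\circ\pi^{+}$ then commutes, because both $t$ and $p_R$ forget exactly the three points on $L$, so $t\circ\pi^{+}$ is constant on the fibres of $p_R$ and descends to the morphism $\pi^{-}$. To upgrade the contraction to a smooth blow-down I would compute the normal bundle of the centre from the deformation theory of the extensions in Remark \ref{remsection}, verifying that $\bM_4$ is smooth along $Y^{-}\cap\bM_4$ and that $(\pi^{+})^{-1}(D_1)$ is its projectivized normal $\PP^3$-bundle.

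\emph{The fibration $\pi^{-}$ and the main difficulty.} For part (1), $\pi^{-}$ is a $\PP^5$-fibration wherever the rank-six sheaf $\cE$ of fibrewise quartics stays locally free. Away from $t(D_1)\cup t(D_3)$ this is immediate from Lemma \ref{keylemm} since $t$ and $p_R$ are isomorphisms there; over $t(D_1)$ the failure of the fibration along $D_1$ recorded in Lemma \ref{keylemm} is exactly undone by contracting the three points, so the fibre of $\pi^{-}$ is again $\PP^5$, and the fibration degenerates only over $t(D_3)$, where $h^0(I_Z^2(4))\ne 6$. The main obstacle is the smoothness and normal-bundle analysis at the centre: one must show that the limiting sheaves over a colinear $Z$ assemble into a smooth $\PP^3$-bundle, and this is precisely where the explicit description of the degenerate fibres by \texttt{Macaulay2} in the final section is needed.
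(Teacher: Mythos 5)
Your proposal has the right overall shape (restrict the wall-crossing, contract the exceptional divisor, descend to $\pi^{-}$), but it misses the step that constitutes the actual content of the paper's proof: the computation of the centre $\bM_4\cap r(Y^{-})$. You assert that the exceptional locus is the wall-crossing locus ``whose base $\PP^2\times\PP^9$ records the pair $(L,C')$''; this cannot be right, since a $\PP^3$-bundle over $\PP^2\times\PP^9$ is $14$-dimensional while $\bM_4^{+}$ has dimension $11$. What the paper proves (Step 1) is that the closure meets $r(Y^{-})$ only in sheaves supported on $L^2\cup Q$, i.e.\ the cubic in the extension of Remark \ref{remsection} must itself contain the line (so $C'=L\cup Q$ for a conic $Q$), and that for each pair $(L,Q)$ exactly \emph{one} sheaf $F_w$ occurs, giving $\Gamma:=\bM_4\cap r(Y^{-})\cong\PP^2\times\PP^5$. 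This is established by elementary modification of families of stable maps: the normal space $N_{\Delta/\bM(\PP^2,4),[f]}\cong\Hom(I_L(-1),\cO_L(-1))$ is computed via the octahedron axiom, the Kodaira--Spencer map is shown to be injective so the modified sheaves are non-split extensions $\ses{\cO_L(-1)}{F_w}{g_*\cO_C}$, and Proposition \ref{sectionspace} together with Remark \ref{remsection} forces each resulting $\PP^1$-family to be constant. Your B\'ezout remark only shows that the support of a limit over $D_1$ contains $L$; it identifies neither the cubic nor the extension class, and without $\Gamma\cong\PP^2\times\PP^5$ you can conclude neither that the fibre of $\pi^{-}$ over a point of $t(D_1)$ is a $\PP^5$ of conics (part (1)) nor that $p_R$ contracts a $\PP^3$-bundle onto a smooth centre of codimension $4$ (part (2)).

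Two further problems. Taking ``$p^{+}$ is an isomorphism'' from diagram \eqref{diagram} is circular: the paper explicitly lists the isomorphy of the restrictions of $p^{+}$ and $r$ among the statements still to be proven, and only $r$ is available beforehand (Proposition \ref{sectionspace}); the isomorphy of $p^{+}$ comes out of Step 2, where the identification $N_{\Gamma/\bM_4,F_w}\cong N_{t(D_1)/\bN,[\pi^{-}(F_w)]}$ shows that $\widetilde{\bM}_4$ is the blow-up of $\bM_4$ along $\Gamma$ and that its exceptional divisor still maps to a divisor, namely $(\pi^{+})^{-1}(D_1)$, under $p^{+}$. Finally, your claim that the smoothness/normal-bundle analysis over colinear $Z$ ``is precisely where'' the \texttt{Macaulay2} computation enters is a misreading of the paper: Proposition \ref{specialfiber} computes the fibre of $\pi^{-}$ over the \emph{non-curvilinear} locus $D_3$ (the $D_1$ in its statement is a typo, as its proof takes $I_Z=\langle x^2,xy,y^2\rangle$), and it is used only for the Poincar\'e polynomial, not anywhere in the proof of Proposition \ref{mainprop}.
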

\begin{proof}
Recall that the image of the blow-up center $Y^{-}$ by the forgetful map $r:\rM^{-}(4m+1)\lr \rM(4m+1)$ is isomorphic to a $\PP^2$-bundle over $\PP^2\times \PP^9$ where the base space parameterizes the pair $(L,C)$ and the fiber is $\PP(\Ext^1(\cO_L,\cO_C))$ for the line $L$ and cubic curve $C$.
Considering the image of the stable maps parameterized by $\cK_4$, the support of the sheaf $F\in \bM\cap r(Y^{-})$ must be $\text{Supp}(F)=L^2\cup Q$ for some line $L$ and conic $Q$.

Step 1. Through the elementary modification of stable maps, we prove that
\[
\bM\cap r(Y^{-})\cong \PP^2\times \PP^5:=\Gamma
\]
where $\PP^2$ (resp. $\PP^5$) parameterizes lines (resp. coincs) in $\PP^2$. Main stream of the proof is the same as that of \cite[Proposition 5.10]{CCM14}. For the convenience of reader we address the detail.
Let $\Delta$ be the locus of the stable maps $f:C\lr \PP^2$ of degree $4$ such that the image is $f(C)=L\cup Q$ where $L$ (resp. $Q$) is a line (resp. conic). One can regard
$f$ as the composition map $$f=g\circ h:C \xrightarrow{h} C' \xrightarrow{g} \PP^2,$$ where $C'$ is a pair of lines.
Here $h$ is a $2:1$-covering from $L_1$ and a bijection from $L_2$.
Also, $g$ is a partial normalization map such that the image is $g(C')=L\cup Q$.
By applying the octahedron axiom for the complex $f^*\Omega_{\PP^2}\lr h^*\Omega_{C'}\lr \Omega_{C}$,
we see that the normal space of $\Delta$ in $\bM(\PP^2,4)$ at $[f]$ is identified with
\[
N_{\Delta/\bM(\PP^2,4),[f]} \cong \Ext_{C'}^1([g^*\Omega_{\PP^2} \to \Omega_{C'}],O_L(-1)).
\]
On the other hand, let $i:L\subset C'$ is the inclusion map. By the octahedron axiom for the complex $(i\circ g)^*\Omega_{\PP^2}\lr i^*\Omega_{C'}\lr \Omega_L$ again, one can easily see that
there exists a quasi-isomorphism between
\[ i^*[g^*\Omega_{\PP^2} \to \Omega_{C'}]\cong N^*_{L/\PP^2}(-1)[1]\]
and thus 
\[
N_{\Delta/\bM(\PP^2,4),[f]}\cong \Hom(N^*_{L/\PP^2}(-1), \cO_L(-1))\cong \Hom (I_L(-1),\cO_L(-1)).
\]
By analyzing the Kodaira-Spencer map 
\begin{equation}\label{restrictdksmap}
\phi:N_{\Delta/\bM(\PP^2,4),[f]}\to \Ext^1(g_*\cO_C,\cO_L(-1)),
\end{equation}
one can prove that $\phi$ is injective and thus each modified sheaf is non-split (cf. \cite[Lemma 4.10]{CK11}).
Hence after the modification, we have a flat family of stable sheaves parameterized by $\PP(N_{\Delta/\bM(\PP^2,4)})=\PP^1$-bundle over $\Delta$. For $w\in \PP^1$, let $F_w$ be the stable sheaf which fits into a exact sequence
\[
\ses{\cO_L(-1)}{F_w}{g_*\cO_C}.
\]
By Proposition \ref{sectionspace}, $h^0(F_w)=1$ for each point $[w]\in \PP^1$. Hence $F_w\cong F_{w'}$
for any point $[w],[w'] \in \PP^1$ by B\'ezout's Theorem (Remark \ref{remsection}).
By some diagram chasing, we know that $F_{w}$ fits into the short exact sequence
\[
\ses{\cO_{L\cup Q}}{F_w}{\cO_{L}}
\]
and thus the claim is proved. Hence the composition map $t\circ \pi^{+}\circ p^{+}: \widetilde{\rM(4m+1)}\lr \bN$ factors through $\bM_4^{-}=\bM_4$ by the
rigidity lemma. That is, we have a morphism
\[
\pi^{-}: \bM_4\lr \bN.
\]
Note that the fiber of $[L]\in t(D_1)\subset \bN$ is constantly a $\PP^5$ which parameterizes conics. By Lemma \ref{keylemm}, we know that the map $\pi^{-}$ is a $\PP^5$-fiberation except the locus $D_3$.

Step 2. By (1), the normal space of $\Gamma$ in $\bM_4$ at $[F_w]\in \bM_4$ is naturally isomorphic to
\[
N_{\Gamma/\bM_4, F_w} \cong N_{t(D_1)/\bN,[\pi^{-}(F_w)]}.
\]
This implies that the blow-up of $\bM_4=\bM_4^{+}$ along $\Gamma$ is nothing but the strict transformation of the blow-up morphism $p^{-}:\widetilde{\rM(4m+1)}\lr \rM^{-}(4m+1)$ because the projectivation of the normal space $N_{t(D_1)/\bN,[\pi^{-}(F_w)]}$ determines three points on the line $L$. In special, the exceptional divisor $E$ in $\widetilde{\bM}^{+}$ is a $\PP^3$-bundle over $\Gamma$. Then the image $p^{+}(E)\cong E\subset Y^{+}$ is still divisor in $\bM^{+}$ and thus $\widetilde{\bM}^{+}\stackrel{p^{+}}\cong \bM^{+}$. Note that $E$ is the inverse image $(\pi^{+})^{-1}(D_1)$ by its construction. This finishes the proof of (2).
\end{proof}
\section{Poincar\'e Polynomial of $\bM_4$}
In this section we compute the Poincar\'e polynomial of $\bM_4$ by using Proposition \ref{mainprop}.
\subsection{Fibre over the non-curvilinear points}
In this section, we describe the fiber of the morphism $p$ over $Y_1$ by using the computer algebra system, \texttt{Macaulay2} (\cite{GS}) and hence we compute the virtual Poincar\'e polynomial of $\bM_4$.
\begin{proposition}\label{specialfiber}
Each fiber $\pi^{-}:\bM_4\lr \bN$ over the closed point $[Z]\in D_1$ is isomorphic to a projective space $\PP^8$.
\end{proposition}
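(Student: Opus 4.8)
The plan is to compute the fiber of $\pi^{-}:\bM_4\lr \bN$ over a point $[Z]\in D_1$, where $D_1$ is the locus of three colinear points. Recall from Lemma \ref{keylemm} that over the open locus $\bH[3]\setminus(D_1\sqcup D_3)$ the map $\pi^{+}$ is a genuine $\PP^5$-fibration with fiber $\PP(\cH^0(I_Z^2(4)))$, and that $h^0(I_Z^2(4))=5$ exactly on $D_1$. The key point is that $t$ contracts the $\PP^3$-fiber of $D_1$ to a point of $\bN$, so the fiber of $\pi^{-}$ over $[Z]\in D_1$ should be assembled from the blow-down $p_R$ of $(\pi^{+})^{-1}(D_1)$ described in Proposition \ref{mainprop}(2), together with the additional conic data. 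I would first fix an explicit colinear triple, say $Z=\langle x^2,xy,z\rangle$ supported on the line $\{z=0\}$, reducing the global fibration statement to a concrete local computation.

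\textbf{The Macaulay2 computation.} First I would compute $I_Z^2$ and the graded piece $\rH^0(I_Z^2(4))$ in \texttt{Macaulay2}, verifying directly that it is $5$-dimensional and identifying an explicit basis of quartics. The quartics in $I_Z^2$ supported appropriately will split into those that contain the line $L=\{z=0\}$ with multiplicity and the remaining freedom. The expected answer $\PP^8$ has dimension $8$, which exceeds the generic fiber dimension $5$; the excess $3=8-5$ must come precisely from the $\PP^3$ of three-point data on $L$ that gets contracted under $t:\bH[3]\lr \bN$ but which reappears as genuine moduli of sheaves in $\bM_4$. Concretely, over $[Z]\in D_1$ the sheaves $F\in \bM_4$ that map to a single point of $\bN$ include those whose support degenerates to $L\cup Q$ (from Step 1 of Proposition \ref{mainprop}), contributing the $\PP^5$ of conics, plus the $\PP^3$ worth of Hilbert-scheme data along the exceptional locus. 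I would run the computation to exhibit these as a single $9$-dimensional vector space of sections, hence $\PP^8$.

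\textbf{Identifying the fiber globally.} After the local dimension count, I would describe the fiber scheme-theoretically. The cleanest route is to use Proposition \ref{mainprop}(2): since $p_R$ is a smooth blow-down contracting $(\pi^{+})^{-1}(D_1)\cong$ a $\PP^3$-bundle over $\Gamma$, and since $\pi^{-}\circ p_R=t\circ\pi^{+}$, the fiber $(\pi^{-})^{-1}([Z])$ over a point of $t(D_1)$ is obtained from the fiber $(\pi^{+})^{-1}(\text{fiber of }t)$ by the blow-down. I would verify that the total space of quartics and the contracted $\PP^3$ fit together into the projectivization of a single $9$-dimensional space, giving $\PP^8$. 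A consistency check is that this $\PP^8$ must contain the $\PP^5$ of conics (the support-degenerate locus $\Gamma$ over $L$) as a linear subspace and the $\PP^5$ of smooth-support quartics (limit of the generic fiber), with the $\PP^3$ collapse accounting for the gluing.

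\textbf{Main obstacle.} The hardest part will be confirming that the fiber is \emph{reduced and isomorphic} to $\PP^8$ rather than merely having the right dimension or Hilbert polynomial. The $5$-dimensional jump in $h^0(I_Z^2(4))$ over $D_1$ means the naive relative $\proj$ construction from Lemma \ref{keylemm} fails over $D_1$, so I cannot simply extend the $\PP^5$-bundle; I must instead control how the flat limit of stable pairs behaves as $[Z]$ approaches $D_1$ and show no embedded or non-reduced structure appears in the central fiber. This requires either an explicit flatness verification in \texttt{Macaulay2} over a one-parameter family degenerating to the colinear configuration, or an appeal to the smoothness of the blow-down $p_R$ in Proposition \ref{mainprop}(2) to deduce that the fiber, being a projectivized vector bundle fiber modified by a smooth contraction, is automatically a smooth projective space of the predicted dimension.
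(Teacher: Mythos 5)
Your proposal is aimed at the wrong locus, and the statement you are trying to prove is, read literally, false --- the ``$D_1$'' in the proposition is a typo for $D_3$. Three pieces of internal evidence fix this: the section containing the proposition is titled ``Fibre over the non-curvilinear points''; the paper's own proof takes $I_Z=\langle x^2,xy,y^2\rangle$, which is the fat point of $D_3$, not a colinear triple; and the Poincar\'e polynomial computation in Corollary \ref{maincor} attaches the $\PP^8$ fibers to a base $t(D_3)\cong \PP^2$ while using $\PP^5$ fibers everywhere else. Indeed, the literal statement contradicts Proposition \ref{mainprop}(1), which you yourself invoke: $\pi^{-}$ is a $\PP^5$-fibration over $\bN\setminus t(D_3)$, and $t(D_1)$ is contained in $\bN\setminus t(D_3)$ (the colinear schemes and the fat points form disjoint loci in $\bH[3]$), so every fiber of $\pi^{-}$ over $t(D_1)$ is the $\PP^5$ of conics. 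Your mechanism for manufacturing an eight-dimensional fiber --- ``the excess $3=8-5$ comes from the contracted $\PP^3$ \dots which reappears as genuine moduli of sheaves'' --- is exactly backwards: $p_R$ collapses the $\PP^3$ of triples $Z\subset L$ precisely because the corresponding sheaves $F_w$, sitting in $\ses{\cO_{L\cup Q}}{F_w}{\cO_L}$, do not depend on $Z$; since $\pi^{-}\circ p_R=t\circ\pi^{+}$, the fiber of $\pi^{-}$ is the \emph{image} under $p_R$ of the fiber of $t\circ\pi^{+}$, and a contraction can only identify points in that fiber, never enlarge it. (Two smaller slips: $\langle x^2,xy,z\rangle$ defines a single reduced point, not a colinear length-3 scheme; and over $D_1$ the fiber of $\pi^{+}$ inside $\bM_4^{+}$ is not $\PP(\rH^0(I_Z^2(4)))$ at all, but the $\PP^5$ of quartics $L^2Q$, because $\bM_4^{+}$ is the closure of $R_4$, not the full relative Hilbert scheme.)

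What the paper actually proves, and what a corrected argument must establish, is that the fiber over the non-curvilinear point $[Z]=[\langle x^2,xy,y^2\rangle]\in D_3$ is $\PP^8$. Since $D_3$ is disjoint from $D_1$, both $t$ and $p_R$ are isomorphisms near this fiber, so it suffices to compute $(\pi^{+})^{-1}([Z])$ inside $\bM_4^{+}$. The paper does this by a local \texttt{Macaulay2} computation: take Haiman's affine coordinates $a_i,b_i,c_i$ on $\bH[3]$ at $[Z]$, with the universal ideal $\langle q_1,q_2,q_3\rangle$; write the general quartic $f$ with coefficients $d_0,\dots,d_{14}$; impose singularity along the universal subscheme through the resultants of $f_x,f_y,f_z$ against the $q_i$, obtaining an ideal $J$ that cuts out $\bM_4^{+}$ near the fiber; and check that $J+\langle a_1,a_2,b_1,b_2,c_1,c_2\rangle=\langle d_9,\dots,d_{14}\rangle$. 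The fiber is therefore the $\PP^8$ of quartics lying in $\langle x,y\rangle^3$, i.e.\ quartics with a point of multiplicity at least $3$ at the support of $Z$ --- which is also the geometric reason a $\PP^8$ appears here: a plane quartic with an ordinary triple point is rational, and its conductor ideal is exactly $\langle x,y\rangle^2$. Your instinct to run an explicit local computation is the right one and matches the paper's method in spirit, but it must be run at the fat point, and no ``gluing in'' of the contracted $\PP^3$ occurs anywhere.
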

\begin{proof}
We assume that $I_Z=\langle x^2,xy,y^2\rangle$.
We prove the claim by presenting a local neighborhood of $\bM_4^{+}$ containing the fiber $p^{-1}([Z])$ with the help of the computer system.
Let $a_1,a_2,b_1,b_2,c_1,c_2$ be a local affine chart of $\bH[3]$ at $[I_Z]=[\langle x^2,xy,y^2\rangle]$ (\cite[Section 3.1]{Hai02}). The ideal of subschemes parameterizing the points of $\bH[3]$ around $[Z]$ is generated by three quadric polynomials:

\begin{enumerate}[i)]
\item $q_1:=x^2-a_1x-a_2y-a_2(b_1-c_2)-b_2(b_2-a_1)$;
\item $ q_2:=xy-b_1x-b_2y-a_2c_1+b_1b_2$;
\item $ q_3:=y^2-c_1x-c_2y-c_1(b_2-a_1)-b_1(b_1-c_2)$.
\end{enumerate}
Note that $\langle q_1,q_2,q_3\rangle|_{\{a_i=b_i=c_i=0\}}=I_Z$ for $i=1$, $2$.
Let 
\[f(x,y,z)=d_0x^4+d_{1}x^3y+d_{2}x^2y^2+d_3xy^3+\cdots +d_{8}y^3z+d_{9}x^2z^2 +d_{10}y^2z^2+d_{11}xyz^2+d_{12}xz^3+d_{13}yz^3+d_{14}z^{4}\]
 be the homogeneous polynomial defining the plane quartic curve. Let $f_x$ be the partial derivates of $f$ with respect to $x$ (similarly, $f_y$ and $f_z$). The rationality condition of the plane curve provides us the nine resultant conditions 
\[J=\langle \mathrm{Res}(f_x,q_i), \mathrm{Res}(f_y,q_i), \mathrm{Res}(f_z,q_i)\rangle\qquad \mathrm{for} \qquad i=1,2,3.\]
Note that $f_x(x,y,1)\equiv s(x,y) \mod(q_1,q_2, q_3)$ for some quadric polynomial $s(x,y)$ (similarly, $f_y(x,y,1)$ and $ f_z(x,y,1)$). Hence the resultant can be easily computed by using the computer algebra system, \texttt{Macaulay2} (\cite{GS}) and obtain that the ideal $J$ is the defining equation of $\bM^{+}$ around $(\pi^{+})^{-1}([Z])$. The special fiber $(\pi^{+})^{-1}([Z])$ is nothing but \[J+\langle a_1, a_2, b_1,b_2, c_1,c_2\rangle=\langle d_{9}, d_{10},d_{11},d_{12}, d_{13},d_{14} \rangle\]
which defines the space $\PP^8$. Thus we finish the proof of the claim.
\end{proof}
\begin{remark}
The fiber $[Z]$ by $\pi^{-}$ in the statement of Proposition \ref{specialfiber} can be described by the stable map. As we have seen in Example \ref{conexam}, the conductor ideal of the map $$f:\PP^1\lr \PP^2, \; f([s:t])=[g_3(s,t)s:g_3(s,t)t:g_4(s,t)]$$ is given by $\langle x^2,xy,y^2\rangle$ for
the general choice of $g_3(s,t)$ and $ g_4(s,t)$. Also the supporting curve $C=f(\PP^1)$ is defined by the ideal $I_C=\langle g_3(x,y)z-g_4(x,y) \rangle$.
Hence the fiber $p^{-1}([Z])$ contains an open part of $\PP(\rH^0(\cO_{\PP^1}(3) \oplus \cO_{\PP^1}(4)))$.
\end{remark}

\subsection{The virtual Poincar\'e ploynomial of $\bM_4$}\label{motivic}
In this subsection, we caculate the virtual Poincar\'e polynomial of $\bM$ by using Theorem \ref{thm:mainpropintro}. Let $X$ be a quasi-projective variety. For the Hodge-Deligne polynomial $\rE_c(X)(u,v)$ for compactly supported cohomology of $X$, let
\[ \rP(X):=\rE_c(X)(-t,-t)\]
be the \emph{virtual} Poincar\'e polynomial of $X$. For the motivic properties of the virtual Poincar\'e polynomial, see \cite[Theorem 2.2]{Mun08} and \cite[Lemma 3.1]{BJ12}.

\begin{proposition}\label{motivicinvariant}
\begin{enumerate}
\item $\rP(\PP^n)=\frac{t^{2n+2}-1}{t^2-1}.$
\item $\rP(X)=\rP(Z)+\rP(X\setminus Z)$ for any closed subset $Z\subset X$. \label{addition}
\item $\rP(X)=\rP(F)\cdot \rP(B)$ for the \'etal locally trivial fiberation $X\lr B$ with the constant fiber isomorphic to the Grassmannian variety $\mathrm{Gr}(k,n)$. Here $\mathrm{Gr}(k,n)$ parameterizes $k$-dimensional sub-vector spaces in the vector space $\CC^n$.
\end{enumerate}
\end{proposition}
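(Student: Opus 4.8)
The plan is to deduce all three identities from the basic motivic behavior of the Hodge–Deligne polynomial $\rE_c$ established in \cite[Theorem 2.2]{Mun08} and \cite[Lemma 3.1]{BJ12}, and then to specialize through the substitution $u=v=-t$ that defines $\rP$. Since $\rP$ is a pure substitution in $\rE_c$, every additive or multiplicative relation that holds for $\rE_c$ descends at once to $\rP$, so the whole proposition reduces to recording these relations at the level of $\rE_c$.

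First I would treat (1) and (2) together, as both follow from the cut-and-paste (scissor) structure of $\rE_c$. For (2), writing $U=X\setminus Z$ for the open complement of the closed subset $Z$, the long exact sequence of mixed Hodge structures in compactly supported cohomology
\[
\cdots\lr \rH^k_c(U)\lr \rH^k_c(X)\lr \rH^k_c(Z)\lr \rH^{k+1}_c(U)\lr\cdots
\]
yields, on taking the alternating sum of the Hodge numbers of the associated graded pieces, the additivity $\rE_c(X)=\rE_c(Z)+\rE_c(U)$, and hence $\rP(X)=\rP(Z)+\rP(X\setminus Z)$. For (1), I would apply this additivity to the affine cell decomposition $\PP^n=\bigsqcup_{k=0}^{n}\CC^k$ together with $\rE_c(\CC^k)=(uv)^k$; summing gives $\rE_c(\PP^n)=\sum_{k=0}^n (uv)^k$, and therefore $\rP(\PP^n)=\sum_{k=0}^{n}t^{2k}=\frac{t^{2n+2}-1}{t^2-1}$.

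The substantive item is (3), the multiplicativity over an \'etale locally trivial fibration $X\lr B$ whose fiber is a fixed Grassmannian $\mathrm{Gr}(k,n)$, and this is the step I expect to require the most care. Multiplicativity of $\rE_c$ is automatic for Zariski locally trivial fibrations, but the \'etale case demands an extra argument. The resolution rests on the fact that the cohomology of $\mathrm{Gr}(k,n)$ is spanned by Schubert classes and is pure of Tate type, so that the monodromy on the local systems $R^i f_* \QQ$ is trivial; the Leray spectral sequence then degenerates at $E_2$ and its $E_2$-page splits as the tensor product of the (constant) fiber cohomology with the cohomology of the base as mixed Hodge structures. Granting this degeneration one obtains $\rE_c(X)=\rE_c(\mathrm{Gr}(k,n))\cdot \rE_c(B)$ and hence the claimed product formula for $\rP$. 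The main obstacle is thus to confirm that the algebraicity of the Schubert classes genuinely trivializes the monodromy, so that \'etale-local triviality is as good as Zariski-local triviality for the E-polynomial --- which is precisely what the cited motivic framework provides.
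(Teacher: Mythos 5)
First, a remark on the comparison: the paper does not actually prove this proposition at all; it is invoked as a known motivic property of $\rE_c$, with \cite[Theorem 2.2]{Mun08} and \cite[Lemma 3.1]{BJ12} cited in place of an argument. Your proofs of (1) and (2) are correct and are the standard ones: additivity follows from the long exact sequence of mixed Hodge structures on compactly supported cohomology attached to the decomposition $X=Z\sqcup U$, and (1) follows by applying additivity to the cell decomposition $\PP^n=\bigsqcup_{k=0}^{n}\AA^k$ and substituting $u=v=-t$.

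The genuine gap is in (3), at exactly the step you flag as the crux. The inference ``$H^*(\mathrm{Gr}(k,n))$ is spanned by Schubert classes and is pure of Tate type, hence the monodromy on $R^if_*\QQ$ is trivial'' is not valid: monodromy acts by ring automorphisms of the fiber cohomology, and such automorphisms may permute algebraic classes, so algebraicity and Tate purity cost nothing. The simplest illustration is a connected \'etale double cover $X\lr B$ of smooth curves (fiber two reduced points, whose cohomology is algebraic and pure Tate): for a genus $3$ cover of a genus $2$ curve, $\rP(X)=1+6t+t^2\neq 2(1+4t+t^2)=2\rP(B)$. The failure even occurs with Grassmannian fibers: when $n=2k$ the duality involution $W\mapsto W^{\perp}$ is a regular automorphism of $\mathrm{Gr}(k,2k)$ acting nontrivially on middle cohomology (on $\mathrm{Gr}(2,4)$ it exchanges $\sigma_2$ and $\sigma_{1,1}$), so for a connected \'etale double cover $B'\lr B$ of curves of genus $\geq 2$ the free quotient $X=\bigl(B'\times \mathrm{Gr}(2,4)\bigr)/(\ZZ/2)$ (deck transformation times duality) is an \'etale locally trivial $\mathrm{Gr}(2,4)$-fibration with $\rP(X)=\rP(B)\cdot(1+t^2+t^4+t^6+t^8)+2t^5\neq \rP(B)\cdot\rP(\mathrm{Gr}(2,4))$. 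Thus the triviality of the monodromy must come from a restriction on the structure group rather than from the Schubert calculus: $\mathrm{Aut}(\PP^n)=\mathrm{PGL}_{n+1}$, and $\mathrm{Aut}(\mathrm{Gr}(k,n))=\mathrm{PGL}_n$ for $n\neq 2k$, are \emph{connected}, so any \'etale locally trivial fibration with such a fiber has trivial monodromy, and then your Deligne--Leray degeneration argument (or Leray--Hirsch using the relative anticanonical class, resp.\ the tautological Chern classes when the fibration is the Grassmannian bundle of a vector bundle) does close the proof. This restriction is implicit in the cited lemmas and holds for every fibration used in the paper (the fibers are $\PP^2$, $\PP^3$, $\PP^5$, $\PP^8$); but as written, your step (3) asserts the key point rather than proving it, and read literally for all Grassmannians the mechanism you propose cannot work.
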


\begin{corollary}\label{maincor}
The virtual Poincar\'e polynomials of $\bM_4$ is given by 
\[
1+2t^2+5t^4+8t^6+11t^8+12t^{10}+13t^{12}+13t^{14}+11t^{16}+7t^{18}+3t^{20}+t^{22}.
\]
\end{corollary}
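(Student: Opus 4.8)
The plan is to compute $\rP(\bM_4)$ by stratifying along the morphism $\pi^{-}:\bM_4\lr \bN$ and applying the additivity and multiplicativity recorded in Proposition \ref{motivicinvariant}. By Proposition \ref{mainprop}(1) the map $\pi^{-}$ is a $\PP^5$-fibration over $\bN\setminus t(D_3)$, while Proposition \ref{specialfiber} identifies the fibre over the non-curvilinear locus as $\PP^8$. Since a non-curvilinear length-$3$ scheme lies in no line, $D_3$ is disjoint from the colinear locus $D_1$ contracted by $t$, so $t$ restricts to an embedding on $D_3$ and $t(D_3)\cong D_3\cong\PP^2$. Both fibrations are projectivizations of locally free sheaves—over the generic part through the bundle $\cE$ of Lemma \ref{keylemm}, and over $t(D_3)$ by the $\mathrm{PGL}_3$-homogeneity of $D_3$—hence are \'etale locally trivial, so Proposition \ref{motivicinvariant}(2),(3) yields
\[
\rP(\bM_4)=\rP(\PP^5)\bigl(\rP(\bN)-\rP(\PP^2)\bigr)+\rP(\PP^8)\,\rP(\PP^2).
\]
Everything therefore reduces to determining $\rP(\bN)$.

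To find $\rP(\bN)$ I would first record the Poincar\'e polynomial of $\bH[3]$, which is smooth projective so its virtual and ordinary Poincar\'e polynomials agree. The standard computation (e.g.\ via G\"ottsche's formula for $\mathrm{Hilb}^n(\PP^2)$) gives
\[
\rP(\bH[3])=1+2t^2+5t^4+6t^6+5t^8+2t^{10}+t^{12},
\]
a symmetric polynomial with $\chi(\bH[3])=22$ as expected. Feeding the smooth blow-down $t:\bH[3]\lr\bN$ of Proposition \ref{prop:hilbcon}, which contracts the $\PP^3$-bundle $D_1$ over $\PP^2$ onto the codimension-$4$ center $t(D_1)\cong\PP^2$, into the blow-up relation (a formal consequence of Proposition \ref{motivicinvariant}(2),(3), since the exceptional divisor is a locally trivial $\PP^3$-bundle over the center) gives
\[
\rP(\bH[3])=\rP(\bN)+\rP(\PP^2)\bigl(\rP(\PP^3)-1\bigr),
\]
whence $\rP(\bN)=1+t^2+3t^4+3t^6+3t^8+t^{10}+t^{12}$.

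It then remains to substitute $\rP(\bN)$ together with the standard values $\rP(\PP^2)=1+t^2+t^4$, $\rP(\PP^5)=\sum_{i=0}^{5}t^{2i}$ and $\rP(\PP^8)=\sum_{i=0}^{8}t^{2i}$ from Proposition \ref{motivicinvariant}(1) into the displayed formula for $\rP(\bM_4)$ and expand. This is a routine polynomial multiplication and produces exactly the asserted polynomial; as consistency checks, the top degree $t^{22}$ matches $2\dim\bM_4=2(3\cdot4-1)$, and evaluating at $t=1$ (so $\rP(\PP^n)\mapsto n+1$) recovers $\chi(\bM_4)=6\cdot(13-3)+9\cdot3=87$.

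The genuinely delicate point is geometric rather than arithmetic. Multiplicativity of $\rP$ along a fibration requires honest local triviality with constant projective fibre, so the crux is knowing that $\pi^{-}$ is locally trivial with fibre $\PP^5$ \emph{precisely} away from $t(D_3)$, and that the fibre jumps there to a single $\PP^8$ uniformly over all of $t(D_3)$. The first is furnished by Lemma \ref{keylemm} and the realization of $\bM_4^{+}$ inside $\PP(\cE)$, while the second is exactly the content of the explicit \texttt{Macaulay2} computation of Proposition \ref{specialfiber} combined with the homogeneity of the non-curvilinear locus. Once these facts are in hand, the Corollary is pure bookkeeping.
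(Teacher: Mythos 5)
Your proposal is correct and follows essentially the same route as the paper: the same stratification formula $\rP(\bM_4)=\rP(\PP^5)\cdot(\rP(\bN)-\rP(\PP^2))+\rP(\PP^8)\cdot \rP(\PP^2)$ derived from Proposition \ref{mainprop} and Proposition \ref{specialfiber}, and the same blow-down relation from Proposition \ref{prop:hilbcon} to deduce $\rP(\bN)$ from $\rP(\bH[3])$ (the paper cites Elencwajg--Le Barz for that value rather than G\"ottsche's formula, which is equivalent). Your additional justifications --- that $D_3$ is disjoint from the colinear locus $D_1$ so $t(D_3)\cong\PP^2$, and that both strata carry genuinely locally trivial fibrations so that Proposition \ref{motivicinvariant}(3) applies --- are details the paper leaves implicit, not a different method.
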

\begin{proof}
By $P(\bH[3])=1 + 2t^2 + 5t^4 + 6t^6 + 5t^8 + 2t^{10} + t^
{12}$ (\cite{ELB88}) and Proposition \ref{prop:hilbcon},
\[P(\bN)=P(\bH[3])+(1-P(\PP^3))\cdot P(\PP^2)=1+t^2+3t^4+3t^6+3t^8+t^{10}+t^{12}.\]
Also, from Proposition \ref{mainprop}, Proposition \ref{specialfiber} and Proposition \ref{motivicinvariant}, we have
\[
P(\bM)=P(\PP^5)\cdot(P(\bN)-P(\PP^2))+P(\PP^8)\cdot P(\PP^2).
\]
Cooking up these ones, we obtain the results.
\end{proof}



\bibliographystyle{alpha}

\end{document}